\newcommand{\cmark}{\ding{51}}%
\newcommand{\xmark}{\ding{55}}%
\theoremstyle{plain}
   \newtheorem{theorem}{Theorem}[section]
   \newtheorem{prop}[theorem]{Proposition}
   \newtheorem{lemma}[theorem]{Lemma}
\theoremstyle{definition}
   \newtheorem{definition}[theorem]{Definition}
   \newtheorem{example}[theorem]{Example}
   \newtheorem{remark}[theorem]{Remark}
\numberwithin{equation}{section}
\newenvironment{smallpmatrix}[1][]{\left(\begin{smallmatrix}#1}{\end{smallmatrix}\right)}
\newcommand{\RR}{{\mathbb {R}}}
\newcommand{\ZZ}{{\mathbb {Z}}}
\newcommand{\om}{\omega}
\newcommand\CC{{\mathbb C}}
\newcommand\BBZ{{\mathbb Z}}
\newcommand\GL{{\operatorname{GL}}}
\DeclareMathOperator*{\Tran}{Tran}
\DeclareMathOperator*{\Lin}{Lin}
\newcommand\diag[1]{{\operatorname{diag}(#1)}}
\begin{document}
\title[Steinberg's Theorem]{Steinberg's theorem \\ for
  crystallographic complex
reflection groups}

\author{Philip\ Puente}
\author{Anne V.\ Shepler}
\thanks{Research supported in part by NSF Grant DMS-1101177
and Simons Foundation Grant \#429539.}
\email{philip.c.puente@dartmouth.edu, ashepler@unt.edu}
\address{Department of Mathematics\\Dartmouth College\\Hanover, NH 03755, USA}
\address{Department of Mathematics\\University of North Texas\\Denton, TX 76203, USA}

\date{February 23, 2018.}

\keywords{Complex reflection groups,  
crystallographic groups, lattices, affine Weyl groups, Coxeter groups.}

\begin{abstract}
Popov classified crystallographic
complex reflection groups by determining lattices they
stabilize.
These analogs of affine Weyl groups have infinite order 
and are generated by reflections about
affine hyperplanes; most arise
as the semi-direct product of a finite complex reflection group and a full rank lattice.
Steinberg's fixed point theorem asserts that the
regular orbits under the action of a reflection group
are exactly the orbits lying off of reflecting hyperplanes.
This theorem holds for finite reflection groups (real or complex)
and also affine Weyl groups but fails for some crystallographic
complex reflection groups.
We determine when Steinberg's theorem holds for the infinite family
of crystallographic complex reflection groups.
We include crystallographic groups built on finite Coxeter groups.
\end{abstract}

\maketitle

\vspace{-3ex}

\section{Introduction}
We investigate Steinberg's fixed point theorem for affine complex reflection groups,
analogs of affine Weyl groups 
called {\em crystallographic complex reflection groups}.
These discrete groups were classified by Popov~\cite{Popov} in 1982 and
are generated by reflections about affine hyperplanes, i.e., mirrors
that do not necessarily include the origin.
They combine 
the isometries of a finite
complex reflection group $G\leq\text{GL}(V)$ with 
translation along a $G$-invariant lattice $\Lambda$ in $V=\CC^n$,
but they can not always be described as a semi-direct product $G\ltimes \Lambda$.
The adjective {\em crystallographic} indicates
that the orbit space $V/W$
for the action of $W$ on $V$ is compact.

A point under
the action of a group is
called {\em regular} if its 
stabilizer in the group is trivial.
Steinberg~\cite{Steinberg, Steinberg68} showed that the regular points under the action of a finite
reflection group (real or complex) or an affine Weyl group are precisely those lying off of the reflecting hyperplanes.
A famous (or infamous)
series of exercises in
Bourbaki~\cite[Ch.~V, \S5, Ex.~8]{Bourbaki}
also outlines a proof of this
fact using Auslander~\cite{Auslander}. 
More recently,
Lehrer~\cite{Lehrer} gave a proof based on elementary
invariant theory.
Steinberg~\cite{Steinberg68} remarked 
that this result
is sometimes known as
"Chevalley's Theorem" in the case of
Coxeter groups although
it was known to Cartan and Weyl.
We show that Steinberg's theorem holds for 
most crystallographic complex
reflection groups but not all.
In fact, it fails for some groups
whose underlying
linear parts are finite Coxeter groups.

Every affine complex reflection group is the direct sum of irreducible ones (or trivial groups),
and the conclusion of Steinberg's theorem
is preserved under
direct sum.
Thus one asks:
For which
irreducible groups
does
Steinberg's 
theorem hold?
Popov's classification~\cite{Popov} 
of irreducible crystallographic 
reflection groups
comprises one infinite family of groups
$[G(r,p,n)]_k=G(r,p,n)\ltimes
\Lambda$
depending on $4$ parameters
and some exceptional groups.
The $4$-parameter
family combines the 
$3$-parameter family of finite complex reflection groups $G(r,p,n)$
(which includes the infinite
families of Coxeter groups)
with various invariant lattices
$\Lambda$ in $\CC^n$.
We classify those groups in this 
family for which non-regular orbits
all lie on reflecting hyperplanes, thus determining when Steinberg's theorem
holds.  We treat the case
of crystallographic groups built on Coxeter
groups separately.

\begin{theorem}
   Let $\Lambda$ be a $G(r,p,n)$-invariant
  lattice of full rank $2n$ in $\CC^n$ for $r,p,n\geq 1$.  Assume $G(r,p,n)$
  is not a Coxeter group.
  The set of nonregular points
  for  $W=G(r,p,n)\ltimes \Lambda$
acting on $\CC^n$
is the union of reflecting
affine 
hyperplanes for $W$
if and only if\/ $r \neq p$ and
$W\neq[G(3,1,n)]_2$ and $W\neq[G(6,3,2)]_2$.
\end{theorem}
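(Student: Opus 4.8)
The inclusion of reflecting affine hyperplanes into the set of nonregular points is automatic --- a point on the hyperplane of an affine reflection $(s,\mu)$ of $W$ is fixed by it --- so the theorem is really about the reverse inclusion, and my plan is to make this pointwise. For $v\in\mathbb C^n$ put $T_v=\{g\in G(r,p,n):(1-g)v\in\Lambda\}$; since $\Lambda$ is $G(r,p,n)$-invariant, $T_v$ is a subgroup, the point $v$ is nonregular for $W$ iff $T_v\neq\{1\}$, and $v$ lies on a reflecting affine hyperplane of $W$ iff $T_v$ contains a reflection of $G(r,p,n)$. Thus Steinberg's theorem holds for $W$ exactly when $T_v\neq\{1\}$ forces $T_v$ to contain a reflection. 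I will use throughout that the reflections of $G(r,p,n)$ are the transpositions $s_{ij}^{\zeta}$ ($\zeta\in\mu_r$, $i\neq j$) together with, only when $p<r$, the diagonal reflections with eigenvalue in $\mu_{r/p}\setminus\{1\}$; and that, by Popov's classification as recalled for the lattices, every full-rank $G(r,p,n)$-invariant lattice $\Lambda$ is, up to scaling and the $G(r,p,n)$-action, either the split lattice $\mathbb Z[\zeta_r]^n$ with $r\in\{3,4,6\}$ (or, when $n=1$, $r/p\in\{3,4,6\}$), or one of finitely many non-split overlattices obtained from it by adjoining a ``diagonal'' vector.

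Suppose first $\Lambda=\mathbb Z[\zeta_r]^n$, $r\in\{3,4,6\}$, and take $g\in T_v\setminus\{1\}$ in monomial form. If the underlying permutation of $g$ is nontrivial, say $ge_i=\beta e_j$ with $j\neq i$, then reading off the $j$th coordinate of $(1-g)v\in\mathbb Z[\zeta_r]^n$ gives $v_j-\beta v_i\in\mathbb Z[\zeta_r]$, whence $s_{ij}^{\beta}\in T_v$ and we are done. If instead $g=\mathrm{diag}(\omega_1,\dots,\omega_n)$ is diagonal with $\prod_i\omega_i\in\mu_{r/p}$: when $p=r$ no reflection is available, and Steinberg's theorem will in fact fail here; when $p<r$, I would show a reflection still lies in $T_v$ by a short case analysis over $r\in\{3,4,6\}$ and $p\mid r$, $p<r$. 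If some nontrivial $\omega_i$ lies in $\mu_{r/p}$ the corresponding diagonal reflection is in $T_v$; otherwise each nontrivial $\omega_i$ lies in $\mu_r\setminus\mu_{r/p}$, and then $1-\omega_i$ is either a unit (so $v_i\in\mathbb Z[\zeta_r]$ and any diagonal reflection supported at $i$ is in $T_v$), or divides $1-\omega$ for some $\omega\in\mu_{r/p}\setminus\{1\}$ (the case $r=4$, $p=2$, where $1-i$ divides $2$), or --- in the remaining subcases $r=6$, $p\in\{2,3\}$ --- the relation $\prod_i\omega_i\in\mu_{r/p}$ forces a second such coordinate and one concludes with a transposition reflection, using that $\mu_r$ surjects onto $(\mathbb Z[\zeta_r]/\mathfrak p)^{\times}$ for the relevant ramified prime $\mathfrak p$. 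This settles the ``if'' direction for the split lattice.

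For the ``only if'' direction I would produce explicit bad points. When $r=p$ (so, excluding Coxeter cases, $r\in\{3,4,6\}$ and $n\geq3$): with $\zeta=\zeta_3$ if $3\mid r$ and $\zeta=-1$ if $r=4$, let $v$ have $v_1=(1-\zeta)^{-1}$, $v_2=0$, and $v_3,\dots,v_n$ algebraically independent transcendentals; then $v$ is fixed modulo $\Lambda$ by $\mathrm{diag}(\zeta,\zeta^{-1},1,\dots,1)$ (or by $\mathrm{diag}(-1,-1,1,\dots,1)$ when $r=4$), which is not a reflection, yet $v_a-\xi v_b\notin\mathbb Z[\zeta_r]$ for all $a\neq b$ and $\xi\in\mu_r$, so $v$ lies on no reflecting affine hyperplane. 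Finally one must run through Popov's list of non-split lattices: all of them satisfy Steinberg's theorem except $[G(3,1,n)]_2$ and $[G(6,3,2)]_2$, where the adjoined vector has denominator the ramified prime over $3$ --- a denominator no available reflection can absorb --- and for these one exhibits a nonregular point, built from an element with a long twisted cycle composed with a translation by the diagonal vector, lying off every reflecting affine hyperplane. \emph{The hard part is this non-split analysis}: the clean ``cycle $\Rightarrow$ transposition reflection'' reduction of the split case breaks down because coordinates of lattice vectors need no longer be algebraic integers, so both the uniform construction of the counterexamples for $[G(3,1,n)]_2$ across all $n$ and the verification that the other non-split lattices (such as $[G(4,1,n)]_2$) do obey the theorem require tracking congruences modulo the adjoined vector rather than working coordinatewise.
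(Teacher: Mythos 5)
Your reduction to the subgroup $T_v=\{g\in G(r,p,n):(1-g)v\in\Lambda\}$ is correct and clean (it is valid precisely because $W$ is a semidirect product), and your treatment of a monomial $g$ with nontrivial underlying permutation over the split lattice is genuinely slicker than the paper's, which instead conjugates to weighted cycles, splits on the determinant of the weight, and passes to powers (Lemma~\ref{cyclecase} and Proposition~\ref{[G(r,1,n)]_1}). Your sketch of the diagonal case for $p<r$ over $\ZZ[\zeta_r]^n$ also matches the paper's Lemma~\ref{TechnicalLemma} in substance: parts (1)--(2) are your ``diagonal reflection absorbs the denominator'' cases and part (3) is your ``two coordinates congruent modulo the ramified prime, use a weighted transposition'' case.

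However, there are two genuine gaps. First, your description of the lattices is wrong, and it breaks your $r=p$ counterexample. The lattices $\Lambda$ for $[G(3,3,n)]_1$ and $[G(4,4,n)]_1$ (and for $[G(4,2,n)]_1$, $[G(4,2,2)]_3$, and $[G(6,3,2)]_2$, the last of which lives over the non-maximal order $\ZZ+2\zeta_6\ZZ$) are proper \emph{sublattices} of $\ZZ[\zeta_r]^n$, not the split lattice or overlattices of it: every vector of $\Lambda$ for $[G(r,r,n)]_1$ has coordinate sum divisible by $1-\zeta_r$, so $e_1\notin\Lambda$ when $r=3,4$ and your point $v=((1-\zeta)^{-1},0,\dots)$ is not actually nonregular. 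The natural repair, $v=((1-\zeta)^{-1},-(1-\zeta^{-1})^{-1},\dots)$ with translation $e_1-e_2$, fails differently: for $r=3$ one computes $v_2=\omega v_1$, so $v$ lies on the reflecting hyperplane $x_2=\omega x_1$, and for $r=4$ one gets $v_1-v_2=1$, on the hyperplane $x_1-x_2=1$. This is why the paper's Table~\ref{counterexamples} uses $\diag{\omega,\omega,\omega}$ and $\diag{i,-1,i}$ rather than $\diag{\zeta,\zeta^{-1},1}$; some choice of this kind, with the verification that the three coordinates of the fixed point lie in distinct orbits of $G(r,1,1)\ltimes\ZZ[\xi]$ on $\CC$, is required. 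Second, the part you flag as ``the hard part'' is where most of the paper's work actually lies and cannot be left as a remark: the positive results for $[G(4,1,n)]_2$ (a tessellation/orbit argument), $[G(4,2,2)]_3$, and $[G(6,2,2)]_2$ (a coset-index count in $(1/2)\ZZ[\om]/\ZZ[\om]$ and $\ZZ[\om]/(1-\om)\ZZ[\om]$), as well as the explicit bad points for $[G(3,1,n)]_2$ and $[G(6,3,2)]_2$, all require arguments you have not supplied, and your proposed uniform framework (congruences modulo an adjoined diagonal vector of an overlattice) does not even apply to $[G(6,3,2)]_2$ or $[G(4,2,2)]_3$. As it stands the proposal proves the theorem only for the groups $[G(r,1,n)]_1$, $[G(6,p,n)]_1$, and $[G(6,6,n)]_1$.
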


We also determine those
crystallographic reflection groups
$G\ltimes \Lambda$
for which Steinberg's theorem fails when $G$ lies in an infinite family of
Weyl groups, see~Theorem~\ref{nongenuine}:
The theorem fails
except when $G$
is $W(A_{n-1})$
or 
$W(B_n)$,
and even then it does not always hold.
Our arguments rely on 
analysis of orbits under various reflection groups of infinite order acting on subsets of $\CC$.
The next two examples
are those mentioned in the theorem.
We use the standard basis
$e_1,\ldots, e_n$ of $V=\CC^n$.

\begin{example}
The group $[G(6,3,2)]_2$ acting on $\CC^2$ is $G(6,3,2)\ltimes \Lambda$
for $\ZZ$-lattice in $\CC^2$ of rank $4$
$$
\Lambda=\ZZ[2\xi](\xi e_1-e_2)+ \ZZ[2\xi](1-\xi)(e_1-e_2),
$$
where $\xi=e^{2 \pi i/6}$
and where
$G(6,3,2)$ is the linear group generated
by
$
\begin{smallpmatrix}
0& 1\rule[-.5ex]{0ex}{1.5ex}\\
1 & 0
\end{smallpmatrix},
\begin{smallpmatrix}
\xi^3 & 0\\
0 & 1
\end{smallpmatrix},
\text{ and }
\begin{smallpmatrix}
0 & \xi \\
\xi^{-1} & 0
\end{smallpmatrix}.
$
\end{example}
\begin{example}
The group $[G(3,1,n)]_2$ acting on $\CC^n$
is $W=G(3,1,n)\ltimes \Lambda$ for
$\ZZ$-lattice of rank $2n$
$$
\begin{aligned}
\Lambda\ &=\
\ZZ[\om]e_1\oplus\Sigma_{k=2}^{n}\ \ \ZZ[\om]\, \left(\tfrac{1}{1-\om}\right)(e_{k-1}-e_k)\\
\end{aligned}
$$
where $\om=e^{2 \pi i/3}$.
Here,
$G(3,1,n)$ is the finite complex
reflection group
generated by the $n\times n$ permutation
matrices together with the diagonal matrix
$\diag{\om, 1, \ldots, 1}$.
\end{example}

\subsection*{Outline}
We give basic notions 
and recall
Steinberg's fixed point theorem for
reflection groups in Section~\ref{setup}.  We review
Popov's classification in Section~\ref{classification}.
The case of crystallographic groups built upon the symmetric group $\mathfrak{S}_n$ appears
in Section~\ref{symmetricgroupsection}.  We determine the genuine groups $G(r,p,n)\ltimes \Lambda$
satisfying Steinberg's theorem in
Section~\ref{groupspassing}
and those for which the theorem fails in Section~\ref{groupsfailing}.  In Section~\ref{Coxetersection}, we consider crystallographic groups built upon Coxeter groups.


\section{Crystallographic reflection groups}
\label{setup}

We fix a positive definite inner product on $V=\CC^n$
and standard basis
$e_1,\ldots, e_n$ of $V$
with dual basis $x_1,\ldots, x_n$
of $V^*$.
All lattices are $\ZZ$-lattices
and 
a lattice in $\CC^n$ 
has {\em full rank} if
it has rank $2n$.

\subsection*{Affine transformations}
We identify the set of
affine transformations on $V$ with $\text{M}_n(\CC)\ltimes V$
so that 
an {\em affine transformation} $g$
of $V$ is the composition
of a linear transformation and a translation:
$$g(v)=\Lin(g)(v)+\Tran(g)
\quad\text{ for } v \text{ in }V\, ,
$$ 
for some fixed matrix
$\Lin(v)\in M_n(\CC)$,
the {\em linear part} of $g$, and a fixed vector $\Tran(g)=g(0)\in V$,
the {\em translational part} of $g$.
Note that $\Lin(g)$
is the map $v\mapsto g(v)-g(0)$.
The set of invertible
affine transformations $A(V)$ is
identified with
$GL(V)\ltimes V$ via
$g\mapsto (\Lin(g), g(0))$
and we have a map
$\Lin:A(V)\rightarrow \GL(V)$.

\subsection*{Affine reflections}
An {\em affine reflection} 
(or just {\em reflection}) on $V$ is a non-identity affine isometry $s$
fixing an affine hyperplane $H_s$ in $\CC^n$ pointwise,
called the {\em reflecting hyperplane} of $s$. %
A reflection $s$ is {\em central} if $s(0)=0$ or, equivalently, 
if $H_s=\ker(s-1_V)$ is a linear subspace of $V$ and $s$ is a linear
transformation.
When $s$ is a central reflection of finite order, there is a vector $\alpha_s \perp H_s$
with $s(\alpha_s)=\xi\alpha_s$ for some primitive $m$-th root-of-unity
$\xi$ in $\CC$, the non-identity eigenvalue of $s$,
where $m$ is the order of $s$.  

The lemma below
shows that every affine reflection 
is obtained by composing
a central reflection
with translation by a vector perpendicular to the 
central reflecting hyperplane
(compare with Popov~\cite[Subsection 1.2]{Popov}).
We give a proof
of this fact and related observations we need later for completeness.

\begin{lemma}
\label{geometrylemma}
Suppose $g$ is an affine transformation on $V$
and $\Lin(g)$
has finite order.
Then
\begin{itemize}
\item[(1)] 
The transformation $g$ has finite order
if and only if\/ $g$ fixes some point of $V$. 
\item[(2)] 
The transformation $g$ is a reflection
if and only if 
$$
g(v)=s(v)+b\quad\text{ for all}\ v\in V
$$
for some central reflection $s\in\GL(V)$
of finite order and
$b\in H_{s}^{\perp}$.
Here,
$H_g=H_{s} + h$
for any $h\in H_g$.
\item[(3)]
The transformation $g$ is a reflection if
and only if\/ $g$ fixes a point of $V$ and $\Lin(g)$ is a reflection.
\end{itemize}
\end{lemma}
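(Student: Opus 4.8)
\emph{Proof plan.} The whole lemma rests on conjugation by translations. Writing $\tau_p$ for the translation $v\mapsto v+p$, a one-line computation identifies $\tau_p^{-1}g\,\tau_p$ with the affine map $v\mapsto \Lin(g)(v)+\bigl(g(p)-p\bigr)$, which is \emph{linear} --- and then equal to $\Lin(g)$ --- precisely when $g(p)=p$. So each part will be proved by first producing a fixed point of $g$, conjugating to reduce to $\GL(V)$, and reading off the conclusion from the linear picture; there we use repeatedly that a central reflection $s$ of finite order gives an orthogonal, $s$-stable splitting $V=H_s\oplus H_s^{\perp}$ with $H_s$ a hyperplane, $H_s^{\perp}=\CC\alpha_s$ a line, and $s$ acting on $H_s^{\perp}$ by the scalar $\xi\neq 1$.

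\emph{Part (1).} If $g(p)=p$ then $\tau_p^{-1}g\,\tau_p=\Lin(g)$ has finite order by hypothesis, hence so does $g$. Conversely, if $g^N=\id$, the centroid $p=\tfrac1N\sum_{k=0}^{N-1}g^{k}(0)$ of the orbit of $0$ is a fixed point: $g$ preserves affine combinations, so $g(p)=\tfrac1N\sum_{k=0}^{N-1}g^{k+1}(0)$, and since $g^{N}(0)=0=g^{0}(0)$ this last sum is a reindexing of the one defining $p$. (This implication uses only $g^{N}=\id$, not that $g$ is an isometry.)

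\emph{Part (2).} For ``only if'': a reflection $g$ fixes $H_g$, hence fixes a point $h\in H_g$, hence has finite order by~(1). Then $s:=\tau_h^{-1}g\,\tau_h=\Lin(g)$ is linear of finite order and fixes the linear hyperplane $H_g-h$ pointwise, so $s$ is a central reflection with $H_s=H_g-h$. Undoing the conjugation gives $g(v)=s(v)+b$ with $b=h-s(h)$; splitting $h$ along $H_s\oplus H_s^{\perp}$ and using that each summand is $s$-stable collapses this to $b=h_1-s(h_1)\in H_s^{\perp}$, and $H_g=H_s+h$ with $h$ arbitrary in $H_g$. For ``if'': given $g(v)=s(v)+b$ with $b\in H_s^{\perp}=\CC\alpha_s$, solve $(1_V-s)(h)=b$ by $h=(1-\xi)^{-1}b$, which works because $1_V-s$ acts on the line $H_s^{\perp}$ by the nonzero scalar $1-\xi$; then $g$ is an isometry (its linear part $s$ is), is not the identity (as $s\neq 1_V$), and has fixed-point set exactly $\{v:(1_V-s)(v)=b\}=H_s+h$, an affine hyperplane. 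Hence $g$ is a reflection and $H_g=H_s+h$.

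\emph{Part (3).} The forward implication is contained in~(2). Conversely, if $g$ fixes a point $p$ and $\Lin(g)$ is a central reflection, then $\tau_p^{-1}g\,\tau_p=\Lin(g)$ fixes the linear hyperplane $H_{\Lin(g)}$ pointwise, so $g$ fixes the affine hyperplane $H_{\Lin(g)}+p$ pointwise; since $g$ is a non-identity affine isometry (its linear part being a central reflection), $g$ is a reflection. The one step that is more than routine substitution is the orthogonality bookkeeping in~(2): one must know that $H_s$ has codimension exactly one, so that $H_s^{\perp}$ is the line $\CC\alpha_s$ on which $1_V-s$ is invertible, and that $H_s^{\perp}$ is $s$-stable so that $h-s(h)$ lands in it. Everything else is direct computation.
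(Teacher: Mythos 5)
Your proof is correct and follows essentially the same route as the paper's: a fixed point obtained by averaging an orbit, reduction to the linear part via the relation $g(v)=\Lin(g)(v-p)+p$ at a fixed point $p$, and the orthogonal $s$-stable splitting $V=H_s\oplus H_s^{\perp}$. The only divergence is in the ``if'' direction of (2), where you solve $(1_V-s)h=b$ directly on the line $H_s^{\perp}$ to produce the fixed point, whereas the paper first checks $g^m=\mathrm{id}$ via the vanishing sum $\sum_{k}\xi^{k}b$ and then invokes part (1); both are valid.
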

\begin{proof}
  For (1), if $g$ has finite order, then it
  fixes the average of the elements in the orbit of the zero vector in $V$ under the cyclic group $\langle g
  \rangle$. 
  Conversely, suppose $g$ fixes a point $u$ in $V$ and $\Lin(g)$ has finite order. Then $u=g(u)=\Lin(g)u+\Tran(g)$ and $g(v)=\Lin(g)v+\Tran(g)=\Lin(g)(v-u)+u$ for all $v\in V$.  Then $g$ has finite order as $\left< g\right>$ is conjugate to the finite group
  $\langle \Lin(g) \rangle$. 
  
  For (2), 
  let $\Lin(g)=s$ and $\Tran(g)=b$.
  Suppose $g$ is a reflection.
 Then $H_g=H_0+c$ for some central hyperplane
  $H_0$ in $V$ and some $c$ in $V$, and for any $h_0\in H_0$,
  $$
  h_0+c=g(h_0+c)=s(h_0)+s(c)+b=s(h_0)+g(c)=s(h_0)+c.
  $$
  Hence, $s\neq 1$ is a reflection in $\GL(V)$ with $H_s=H_0$.
  Furthermore, 
  $b\in\text{Im}(1-s)=H_0^{\perp}$.
  Conversely, 
  assume $s\in\text{GL}(V)$
  is a reflection of finite order and $b\in H_s^{\perp}$. Then $s(b)=\xi b$ for some primitive $m$-th root-of-unity where $m$ is the order of $s$.
  Then $$g^m(v)=s^m(v)+\Sigma_{k=0}^{m-1}\ s^k(b)=v+\Sigma_{k=0}^{m-1}\ \xi^k b=v
  \quad\text{ for all } v \in V
  $$ 
  and $g$ also has finite order.  By part~(1), $g$ must fix a point $u$ in $V$. Then $g(h+u)=s(h+u)+b=s(h)+s(u)+b=h+u$ for any $h$ in $H_s$ and
  $g$ is a reflection about the hyperplane $H_s+u$.
  Part~(3) follows from part~(2).
\end{proof}

\subsection*{Reflection groups}
 
An {\em affine reflection group}
(or just {\em reflection group})
is a subgroup of $A(V)$
generated by affine
reflections acting discretely on $V=\CC^n$. The {\em reflecting hyperplanes} for 
a group $W$ are the hyperplanes fixed
by reflections in $W$.
For any reflection group $W$, we set (following Popov~\cite{Popov})
$$
\Lin(W)=\{\Lin(g): g\in W\}\quad\text{ and }\quad
\Tran(W)=\{g\in W: g = \Tran(g)\}\, .
$$
Every affine reflection group $W$ is the product of irreducible
affine reflection groups
(or trivial groups)
and $W$ is irreducible
exactly when $\Lin(W)$ is
irreducible,
see~\cite{Popov}. If $G\leq \GL_n(\CC)$ is an
irreducible finite reflection group and $\Lambda$
is a $G$-invariant lattice in $V$, then $W=G\ltimes \Lambda$
is an affine reflection group with
$\Lin(W)=G$ and $\Tran(W)=\Lambda$.

\subsection*{Crystallographic groups}
An affine reflection group $W$ 
acting on $V=\CC^n$ is {\em crystallographic} if its space of orbits $V/W$ is compact; otherwise it is {\em noncrystallographic}. 
Popov~\cite{Popov} showed that if
$W$ is an irreducible affine reflection group of infinite order,
then $\Tran(W)$ is a lattice of
rank $n=\dim V$ or $2n$,
and 
the crystallographic groups are those
whose lattices
have full rank $2n$.

\subsection*{Genuine groups}
A {\em Coxeter group} is a group of general linear transformations generated by reflections on $\RR^n$.
We assume all Coxeter groups
act discretely.
Every Coxeter group defines
a reflection group on $\CC^{n}$ by extension of scalars.
The infinite Coxeter groups acting
on $\RR^n$ are the {\em affine Weyl groups}; they define
affine reflection groups
acting on $\CC^n$ which are 
not crystallographic 
since the underlying lattices 
have rank $n$ instead of $2n$.
But the Weyl groups
acting on $\RR^n$
define groups acting on $\CC^n$
which stabilize various
lattices of full rank $2n$.  
Following Malle~\cite{Malle},
we call the
resulting affine complex
reflection groups $W$ {\em non-genuine}:
they are crystallographic but
have linear part $\Lin(W)$
a Coxeter group.
We say a crystallographic reflection group $W$
is {\em genuine} when $\Lin(W)$
is not merely
obtained as the complexification of a finite Coxeter group.
See Section~\ref{Coxetersection}.

\subsection*{Steinberg's fixed point theorem}

We recall Steinberg's fixed point theorem:

\begin{theorem}
[Steinberg~\cite{Steinberg},
~\cite{Steinberg68}]
\label{thm:steinberg}
Let $W$ be a Coxeter group (of finite or infinite order)
or a finite complex reflection group
acting on\/ $V$.
Then a vector $v$ in $V$ is fixed by some nonidentity group element of\/ $W$
if and only if\/ $v$ lies on a reflecting hyperplane for $W$.
\end{theorem}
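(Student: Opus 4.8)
The easy direction needs no argument: if $v$ lies on $H_s$ for a reflection $s\in W$, then $s\ne 1$ and $sv=v$. So the content is the converse---a vector with nontrivial stabilizer lies on a reflecting hyperplane---and I would treat the two families in the statement separately.

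For a finite complex reflection group $W\leq\GL(V)$ I would go through the quotient map $\pi\colon V\to V/W$. By Chevalley--Shephard--Todd the invariant ring $\CC[V]^W=\CC[f_1,\dots,f_n]$ is polynomial and $\CC[V]$ is a free module of rank $|W|$ over it, so $\pi$ is finite and flat of degree $|W|$; in particular every scheme-theoretic fibre has length $|W|$. The fibre over $\pi(v)$ is the orbit $Wv$, on which $W$ acts transitively, so the local length of $\pi$ is constant along that orbit and hence equals $|W|/|Wv|=|W_v|$, the order of the stabilizer of $v$. Thus $W_v$ is trivial exactly when $\pi$ is unramified---equivalently, in characteristic zero, \'etale---at $v$, i.e.\ exactly when the Jacobian $\det(\partial f_i/\partial x_j)$ does not vanish at $v$. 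I would then invoke the classical identity $\det(\partial f_i/\partial x_j)=c\,\prod_H\alpha_H^{\,e_H-1}$, the product running over the reflecting hyperplanes $H$ of $W$, where $\alpha_H$ is a linear form vanishing on $H$, $e_H$ is the order of the pointwise stabilizer of $H$, and $c\in\CC^\times$: its zero set is precisely the union of the reflecting hyperplanes, which is the assertion.

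For a Coxeter group $W$ of finite or infinite order I would instead use a fundamental domain for the action---the closed fundamental chamber in the Tits cone, or, for an affine Weyl group, a closed alcove in the real affine space on which it is usually presented. Every point is $W$-conjugate to a point $u$ of this domain $\overline C$, and a standard fact \cite{Bourbaki} is that the pointwise stabilizer of any subset of $\overline C$ is generated by the reflections about the walls of $\overline C$ containing that subset. Hence a real point with nontrivial stabilizer lies on a reflecting hyperplane. To pass to $V=\CC^n$: if $w\in W$ fixes $v=a+bi$ with $a,b\in\RR^n$, then $w$ fixes $a$ and $b$ separately, so it lies in the pointwise stabilizer of $\{a,b\}$; by the previous sentence that group is generated by real reflections $s$ with $a,b\in H_s$, and for any such $s$ one gets $v\in H_s\otimes_\RR\CC$.

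I expect the hardest ingredient to be the Jacobian identity in the complex case---if one instead takes Steinberg's theorem on pointwise stabilizers (that $W_X$ is generated by the reflections it contains, for every $X\subseteq V$) as a black box, both cases follow at once, but the work has then only been relocated. In the infinite-order Coxeter case the substantive input is the geometry of the Tits cone: faithfulness and discreteness of the action, $\overline C$ being a strict fundamental domain, and the walls through a point generating its stabilizer---which is exactly where the discreteness hypothesis is used. The trivial direction, the real-to-complex passage, and the harmless reduction to irreducible $W$ are routine.
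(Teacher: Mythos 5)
This theorem is not proved in the paper: it is recalled as a classical result, with the introduction pointing to Steinberg's papers, the Bourbaki exercises, and Lehrer's invariant-theoretic proof as sources. So there is no in-paper argument to compare against, and your proposal must be judged as a free-standing reconstruction. As such it is essentially the standard proof and essentially correct: the finite complex case via Chevalley--Shephard--Todd, flatness of $\pi\colon V\to V/W$, the identification of the local fibre length at $v$ with $|W_v|$, and the factorization $\det(\partial f_i/\partial x_j)=c\prod_H\alpha_H^{e_H-1}$ is exactly the Steinberg/Lehrer route, and each step you cite is correct.

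Two points in the Coxeter half are stated imprecisely. First, the Bourbaki fact you quote concerns subsets of the closed chamber $\overline C$, but a pair $\{a,b\}$ cannot in general be simultaneously conjugated into $\overline C$ (already for $W=\{\pm1\}$ acting on $\RR$ with $b=-a$); what you actually need is the stabilizer theorem for arbitrary finite subsets, obtained by iterating the chamber fact: conjugate $a$ into $\overline C$, so that $W_a$ is a standard parabolic and hence itself a Coxeter group with its own chamber structure, then repeat for $b$ inside $W_a$. You concede this in your final paragraph, but as written the middle paragraph invokes a statement that does not apply to $\{a,b\}$. Second, for an affine Weyl group the relation $w(a+ib)=a+ib$ does not give $w(b)=b$: writing $w(z)=Az+t$, it gives $w(a)=a$ and $Ab=b$, i.e.\ $w$ fixes the real points $a$ and $a+b$, not $b$. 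Running your argument with the pair $\{a,\,a+b\}$ repairs this, since a reflecting hyperplane through $a$ and $a+b$ is defined over $\RR$ and its complexification contains $a+ib$. With these two adjustments the Coxeter half is the complete Bourbaki argument; the easy direction and the reduction to irreducibles are fine as written.
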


\begin{definition}
We say an affine reflection group $W$ 
has the {\em Steinberg property}
if 
the set of nonregular points 
is the union of
reflecting hyperplanes
for $W$, i.e.,
if the conclusion
of Theorem~\ref{thm:steinberg} holds.
\end{definition}

Recall that affine transformations
which are conjugate under
some $a$ in $\GL(V)$ 
have fixed point spaces
in the same $a$-orbit.
Thus to show that the
fixed point space $V^g$ of some element $g$
in a reflection group $W$ lies
on a reflecting hyperplane for
$W$, we may replace $g$ by
any conjugate of $g$ in $W$.
We may
also replace $g$ by
any power of $g$, as $V^g\subset V^{g^j}$ for all $j$.

\section{Classification}
\label{classification}
\subsection{Rank 1 crystallographic groups}
\label{rank-one}
Every rank $n=1$ reflection group $W$ trivially satisfies
the Steinberg property
since affine hyperplanes
are just points in $V=\CC^1$
and the reflections of $W$ are exactly those group elements in $W$ fixing a point of $V$.
Our arguments later use orbits of various rank $1$ reflection groups acting on certain sets, so we give a few more
details on this case here.  Every rank $n=1$ crystallographic 
reflection group is $$W=G(r,1,1)\ltimes \Lambda $$
for some cyclic group
$G(r,1,1)=\langle \xi \rangle
\subset \text{GL}(V)$
acting on $V=\CC$
for $\xi=e^{\frac{2\pi i}{r}}$ with $r\geq 2$
and some lattice $\Lambda=\ZZ\oplus \ZZ\zeta$
stable under multiplication by $\xi$ with $\zeta\in\CC$. When $W$ is nongenuine, $r=2$
and $\Lambda$ is equivalent to
$\ZZ+\ZZ \alpha$ 
for some $\alpha\in \CC$ in the modular strip
(see Section~\ref{Coxetersection}).
When $W$ is genuine, only 2 possible lattices $\Lambda$ arise,
with $\zeta$ a third or fourth
root-of-unity in $\CC$,
and $r=3$, $4$ or $6$.
In this case, $\ZZ+\ZZ\zeta=\ZZ[\zeta]$
and $W\cong \ZZ/r\ZZ \ltimes \ZZ[\zeta]$.
In fact, every genuine
crystallographic
reflection group acting on $\CC$ is equivalent 
to a subgroup
of one of the 3 examples below
(see Popov~\cite{Popov}).

\begin{example}
The reflection group $W=G(4,1,1)\ltimes \BBZ[i]$ acting on $V=\CC$ is crystallographic
with $\frac{1}{2}\BBZ[i]$ the set of reflecting hyperplanes.
 Larger dots indicate
 points in the lattice $\ZZ[i]$.

 \begin{center}
  \includegraphics[scale=.025]{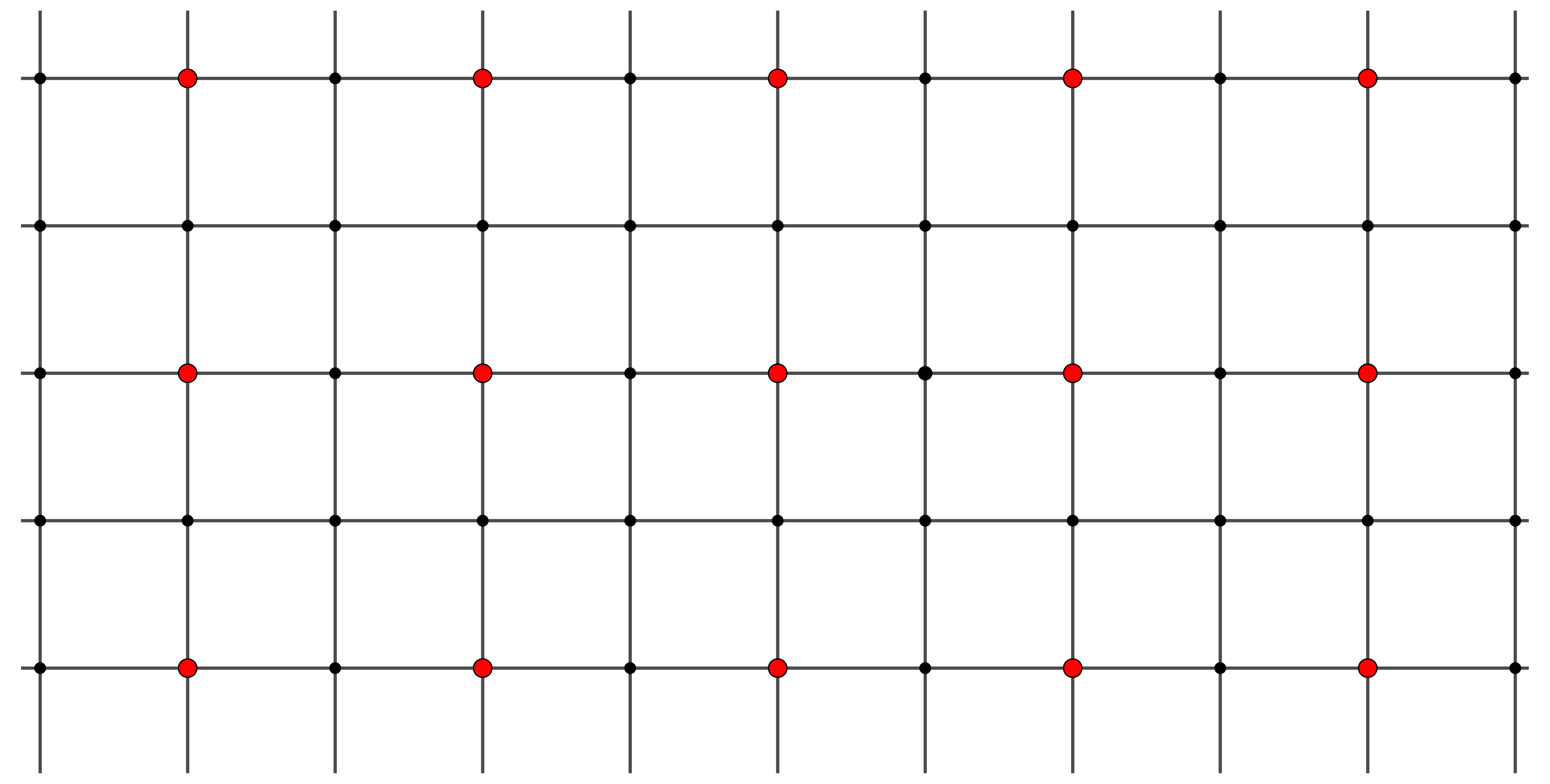}   
 \end{center}
 
\end{example}

\begin{example}
The reflection group $W=G(6,1,1)\ltimes\mathbb{Z}[\om]$ for $\om=e^{2\pi i/3}$ 
acting on $V=\mathbb{C}$ is crystallographic
with 
$\tfrac{1}{1-\om}(\ZZ+\ZZ\om)\cup\tfrac{1}{2}(\ZZ+\ZZ\om)
$
the set of reflecting hyperplanes.
Again, 
larger dots
indicate points in the lattice
$\ZZ[\om]$.
\begin{center}
\includegraphics[scale=.1]{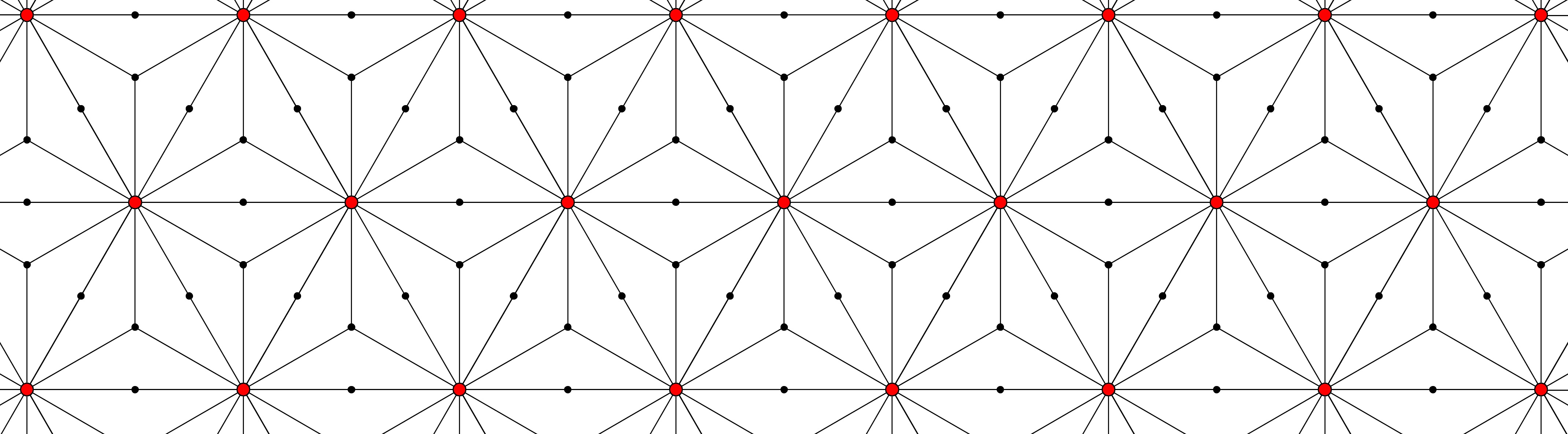}
\end{center}
\end{example}

\begin{example}
The group
$W=G(3,1,1)
\ltimes\mathbb{Z}[e^{2\pi i/3}]$ is a crystallographic reflecting
group acting on $V=\CC$.
\end{example}
\vspace{3ex}

\subsection*{The 3-parameter family of finite complex reflection groups}

Shephard and Todd~\cite{ST}
classified the irreducible
finite complex reflection groups.
They give a 3-parameter
family $G(r,p,n)$ and
34 exceptional groups denoted by $G_i$ for $4\leq i\leq 37$. 
The group $G(r,1,n)$
consists of $n \times n$
monomial matrices (i.e., matrices with a single 
nonzero entry in each row and column) whose nonzero
entries are complex $r$-th roots-of-unity, for $r$ a positive integer.
Note that as an abstract group, 
$$G(r,1,n)\cong \mathfrak{S}_n\ltimes(\mathbb{Z}/r\mathbb{Z})^n,$$
where $\mathfrak{S}_n$
is the symmetric group.
Each group $G(r,1,n)$ is generated
by reflections on $V=\CC^n$ 
of order $2$ and order $r$
(see~\cite{OrlikTerao}).
In fact, $G(r,1,n)$ is the symmetry group of
the {\em cross-polytope} in $\CC^n$,
a regular complex polytope
studied
by Shephard~\cite{Shephard} and Coxeter~\cite{Coxeter}.
Note that $G(r,1,n)$ acts
by isometries with respect
to the standard inner product.

For any integer $p \geq 1$ dividing $r$,
the group $G(r,p,n)$ is the subgroup of $G(r,1,n)$
consisting of those matrices
whose product of nonzero entries is $1$ when raised to the power $r/p$.
The groups $G(r,p,n)$ are also generated by reflections and
include the infinite families of Coxeter groups 
acting on $\RR^n$:
\begin{itemize}
\item 
$G(2,1,n)$ is the Weyl group $\text{W}(B_n)$,
\item $G(2,2,n)$
is the Weyl group
$\text{W}(D_n)$,
\item  $G(r,r,2)$ is the dihedral group $\text{W}(I_r)$ 
of order $2r$ after change-of-basis,
\item  $G(1,1,n)$ is the symmetric group $\mathfrak{S}_n$ acting by permutation matrices
with irreducible reflection
representation
$W(A_{n-1})$.
\end{itemize}
Note that
$G(3,3,2)$ is equivalent to the  complexification
of the Weyl group $W(A_2)$.
Also, $G(4,4,2)$ and $G(2,1,2)$ are equivalent and
$G(2,2,2)=G(2,1,1)\times G(2,1,1)$.

\begin{example}
The group $G=G(4,1,2)$ is generated by matrices 
$
\left(\begin{smallmatrix} 
i&0\\0 &1 
\end{smallmatrix}\right),
\left(\begin{smallmatrix} 0&1\\1 &0 
\end{smallmatrix}\right)
$.
The reflections in $G$ 
are the diagonal matrices with only one diagonal entry not equal to 1 
and the antidiagonal matrices whose nonzero entries are inverse.
The reflecting hyperplanes for $G$ are $H_j=\ker(x_j)$ 
for $j=1,2$ and $H_{1,2}(\zeta)=\ker(x_1-\zeta x_2)$ 
for $\zeta$ a $4$-th root-of-unity.
\end{example}

\subsection*{Popov's Classification of crystallographic groups}

Popov~\cite{Popov} classified the  crystallographic reflection groups
$W$ using Shephard
and Todd's~\cite{ST} notation for their linear parts,
showing that 
if $W$ is irreducible, then $\Lin(W)$ fixes $\Tran(W)$ 
set-wise
and $\Lin(W)$ is one of  
$$ W(A_{n-1}),\ G(2,p,n),\ G(3,p,n),\ G(4,p,n),\
\text{or } G(6,p,n),$$ 
or one of the $16$ exceptional groups
$$
G_4,\ G_5,\ G_8,\ G_{12},\ G_{24},\ G_{25},\ G_{26},\ G_{28},\ G_{29},\ G_{31},\ G_{32},\ G_{33},\ G_{34},\ G_{35},\ G_{36},\ G_{37}.
$$
Each nongenuine crystallographic complex reflection group
stabilizes a moduli space of full rank lattices in $\CC^n$,
see Section~\ref{Coxetersection}.
If $W$ is an irreducible crystallographic complex reflection group
and $W\not\cong \Lin(W)\ltimes \Tran(W)$, 
then 
$\Lin(W)$ is $G(4,2,n)$, $G(6,2,2)$, $G_{12}$, or $G_{31}$.
(Goryunov~\cite{Goryunov} gives the group $[G(6,2,2)]^*$ 
left out of the classification.)
We do not consider these exceptional
cases here.

Popov~\cite{Popov} determined that each irreducible genuine finite complex reflection group $W$
stabilizes at most three 
full rank lattices, up to equivalence.
His notation 
$$
W=[G_i]_k
$$ 
indicates that $W$
has linear part $\Lin(W)=G_i$
in the notation of  Shephard and Todd with the index $k=1$, $2$, or $3$ 
indicating one of possibly
three different lattices
$\Tran(W)=\Lambda$
stabilized by $G_i$.
When there is only one such
lattice, we write $[G_i]_1$,
although Popov merely writes $[G_i]$.

\subsection*{4-parameter family
of crystallographic groups}
The $3$-parameter family 
of finite groups $G(r,p,n)$ 
gives rise to a $4$-parameter family of crystallographic reflection 
groups 
$[G(r,p,n)]_k=
G(r,p,n)\ltimes \Lambda$ where $\Lambda$
is a $G(r,p,n)$-invariant
lattice of rank $2n$ in $\CC^n$.
The affine reflections in
$G(r,p,n)\ltimes \Lambda$ have the form
$
s(v)=\sigma(v) + b$
for $v\in V$
where $\sigma$ is a reflection in
$G(r,p,n)$ fixing some central hyperplane $H_\sigma$
of the form $\ker(x_j-\xi^p x_k)$ or $\ker(x_j)$ for $1\leq j<k\leq n$
and where $b\in \Lambda
\cap (H_\sigma)^\perp$
(see Lemma~\ref{geometrylemma}).

Table~\ref{StateTable} 
gives the
genuine crystallographic 
groups in the infinite family
$[G(r,p,n)]_k=G(r,p,n)\ltimes \Lambda$
and specifies 
those for which
Steinberg's theorem holds.
Here,
$k=1,2,3$ indicates choice of lattice $\Lambda$.
We omit non-genuine groups
$W$, as they appear in
Section~\ref{Coxetersection},
and thus we assume
$r>2$ and omit
the groups $[G(r,r,2)]$.
Again, $\xi$ is a primitive $r$-th root-of-unity
in $\CC$;
note that
$\ZZ[\xi]=\ZZ+\ZZ\xi$ for $r=3,4,6$.
\newcommand{\strutB}{\rule[-2ex]{0ex}{5ex}}
\begin{small}
\begin{table}[h!tbp] 
\caption{Genuine crystallographic
    complex reflection groups\\
    ${}_{}\hspace{4ex}
    W=[G(r,p,n)]_k=G(r,p,n)\ltimes\Lambda$\ \ acting on $\CC^n$}
\label{StateTable}
  \centering
\begin{tabular}{p{.15\textwidth}p{.08\textwidth}p{.42\textwidth}p{.15\textwidth}
}
  \toprule
{\bf Group $W$}
& {\bf dim} 
& {\bf $G(r,p,n)$-invariant lattice $\Lambda$}
& {\bf Steinberg's thm}  
\\
\midrule
\strutB
$[G(r,1,1)]_1$
& $n=1$ 
& $\ZZ[\xi]e_1$
& \cmark 
\\ 
\midrule
\strutB
$[G(3,1,n)]_1$
& $n\geq 2$
& $\ZZ[\xi]e_1+\Sigma_{j=2}^{n}\ZZ[\xi](e_{j-1}-e_j)$
& \cmark 
\\ 
\strutB
$[G(3,1,n)]_2$
& $n\geq 2$
& $\ZZ[\xi]e_1+\Sigma_{j=2}^{n}\ZZ[\xi]
\left(\tfrac{1}{1-\xi}\right)(e_{j-1}-e_j)$
& \xmark 
\\ 
\strutB
$[G(3,3,n)]_1$ 
& $n\geq 3$
&  $\ZZ[\xi](\xi e_1-e_2)+\Sigma_{k=j}^{n}\ZZ[\xi](e_{j-1}-e_j)$
& \xmark 
\\ 
\midrule
\strutB
$[G(4,1,n)]_1$
& $n\geq 2$
& $\ZZ[\xi]e_1+\Sigma_{k=j}^{n}\ZZ[\xi](e_{j-1}-e_j)$
& \cmark 
\\ 
\strutB
$[G(4,1,n)]_2$
& $n\geq 2$
& $\ZZ[\xi]e_1+\Sigma_{j=2}^{n}\ZZ[\xi]\left(\tfrac{1}{1-\xi}\right)(e_{j-1}-e_j)$
& \cmark 
\\ 

\strutB
$[G(4,2,n)]_1$
& $n\geq 2$
& $\ZZ[\xi](\xi e_1-e_2)+\Sigma_{j=2}^{n}\ZZ[\xi](e_{j-1}-e_j)$
& \cmark 
\\ 
\strutB
$[G(4,2,n)]_2$
& $n\geq 2$
& $\ZZ[\xi](\xi e_1-e_2)+\Sigma_{j=2}^{n}\ZZ[\xi](e_{j-1}-e_j)+\ZZ[\xi]e_n$
& \cmark 
\\ 
\strutB
$[G(4,2,2)]_3$
& $n= 2$
& $\ZZ[\xi](\xi e_1-e_2)+\ZZ[\xi](1+\xi)(e_1-e_2)$
& \cmark 
\\ 
\strutB
$[G(4,4,n)]_1$
& $n\geq 3$
&  $\ZZ[\xi](\xi e_1-e_2)+\Sigma_{j=2}^{n}\ZZ[\xi](e_{j-1}-e_j)$
& \xmark 
\\ 
\midrule
\strutB
$[G(6,1,n)]_1$
& $n\geq 2$
& $\ZZ[\xi]e_1+\Sigma_{j=2}^{n}\ZZ[\xi](e_{j-1}-e_j)$
& \cmark 
\\ 
\strutB
$[G(6,2,n)]_1$
& $n\geq 2$
& $\ZZ[\xi](\xi e_1-e_2)+\Sigma_{j=2}^{n}\ZZ[\xi](e_{j-1}-e_j)$
& \cmark 
\\ 
\strutB
$[G(6,3,n)]_1$
& $n\geq 2$
&  $\ZZ[\xi](\xi e_1-e_2)+\Sigma_{j=2}^{n}\ZZ[\xi](e_{j-1}-e_j)$
& \cmark 
\\ 
\strutB
$[G(6,2,2)]_2$
& $n= 2$
&$\ZZ[\xi](\xi e_1-e_2)+\ZZ[\xi](1+\xi)(e_1-e_2)$ 
& \cmark 
\\ 
\strutB
$[G(6,3,2)]_2$
& $n= 2$
& 
$\ZZ[2\xi](\xi e_1-e_2)+ \ZZ[2\xi](1-\xi)(e_1-e_2)$
& \xmark 
\\ 

\strutB
$[G(6,6,n)]_1$
& $n\geq 3$
&  $\ZZ[\xi](\xi e_1-e_2)+\Sigma_{j=2}^{n}\ZZ[\xi](e_{j-1}-e_j)$
& \xmark 
\\ \bottomrule
\end{tabular}
\end{table} 
\end{small}

\FloatBarrier

\section{The symmetric group}
\label{symmetricgroupsection}
The group $G(1,1,n)$
is the symmetric group $\mathfrak{S}_n$ acting
on $V=\CC^n$ in its natural reflection representation by permutation of
basis vectors $e_1,\ldots, e_n$.
It is reducible and not the linear part of any
crystallographic affine
reflection group.
We identify the irreducible Weyl group $W(A_{n-1})\cong \mathfrak{S}_n$ with the restriction of 
$G(1,1,n)$ to 
the subspace  $V'=\CC\text{-span}\{e_2-e_1, \ldots,
e_n-e_{n-1}\}\cong \CC^{n-1}$ of $V$.
Every irreducible
crystallographic complex reflection group 
with linear part 
$W(A_{n-1})$
lies in the $1$-parameter
family
$$
[W(A_{n-1})]^{\alpha}_1
=W(A_{n-1})\ltimes \Lambda_\alpha
\cong \mathfrak{S}_n
\ltimes \Lambda_\alpha
\quad\text{ for }\ 
\Lambda_\alpha
\ =\ \Sigma_{j=2}^{n}(\ZZ+\ZZ\alpha)(e_{j-1}-e_j),
\ \ \text{ with parameter }\alpha\in \CC.
$$
In fact, every $W[(A_{n-1})]_1^\alpha$
is equivalent to a group
with $\alpha$ in the modular strip
(see Section~\ref{Coxetersection}).

We use the next proposition to streamline
arguments later for genuine
and nongenuine groups.
\begin{lemma}
\label{cyclecase}
Let $W=G(r,p,n)\ltimes \Lambda$
for some $G(r,p,n)$-invariant
lattice $\Lambda$ of full rank
in $\CC^n$, for $r,p\geq 1$,
$n\geq 2$.
Suppose that $g$ in $W$ 
has linear part a nontrivial cycle in
the symmetric group
$G(1,1,n)$.
Then any vector fixed by $g$ lies on a reflecting hyperplane for $W$.
\end{lemma}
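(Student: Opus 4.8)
The plan is to reduce to the single-cycle case and then analyze the action on the one coordinate line on which the cycle acts nontrivially. Write $g(v) = \sigma(v) + b$ where $\sigma = \Lin(g)$ is a cycle in $G(1,1,n) = \mathfrak{S}_n$; after conjugating $g$ inside $W$ by a permutation (allowed, since conjugate elements have fixed point spaces in the same orbit, and we only need to show $V^g$ meets a reflecting hyperplane), we may assume $\sigma$ is the cycle $(1\,2\,\cdots\,m)$ for some $2 \le m \le n$, acting on the coordinates $x_1, \dots, x_m$ and fixing $x_{m+1}, \dots, x_n$. First I would observe that a fixed point $v = (v_1, \dots, v_n)$ of $g$ must satisfy, in the first $m$ coordinates, a cyclic system $v_i = v_{i-1} + b_i$ (indices mod $m$), whose consistency forces $b_1 + \cdots + b_m = 0$ and whose solution set, projected to the first $m$ coordinates, is a single line $\{(t, t + c_2, \dots, t + c_m) : t \in \CC\}$ for constants $c_j$ determined by the $b_i$. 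In particular, any such $v$ has $v_1 = v_2 - c_2$, and more to the point, there exist two indices $1 \le j < k \le m$ with $v_j - v_k$ equal to a fixed constant.

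Next I would exhibit a reflecting hyperplane of $W$ containing $V^g$. The transposition $\tau = (j\,k) \in G(r,p,n)$ (its matrix is a permutation matrix, hence a reflection of order $2$ in $G(r,1,n)$, and it lies in $G(r,p,n)$ since its product of nonzero entries is $1$) fixes the central hyperplane $\ker(x_j - x_k)$. The key point is that $V^g$ lies in a \emph{translate} of this hyperplane: by the computation above, $x_j - x_k$ is constant on $V^g$, say equal to $\lambda$. If $\lambda = 0$, then $\tau$ itself (a central reflection in $W$) fixes $V^g$ and we are done. If $\lambda \ne 0$, I would produce an affine reflection $s \in W$ with $\Lin(s) = \tau$ and $\Tran(s) = b'$ for a suitable $b' \in \Lambda \cap \ker(x_j - x_k)^\perp = \Lambda \cap \CC(e_j - e_k)$ so that $s$ fixes the affine hyperplane $\{x_j - x_k = \lambda\}$ pointwise; by Lemma~\ref{geometrylemma}(2) such an $s$ is a genuine affine reflection in $W$ precisely when the needed translation vector $\tfrac{1}{2}\lambda(e_j - e_k)$ (or an appropriate lattice representative) lies in $\Lambda$. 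So the crux is: \emph{show that the constant $\lambda = v_j - v_k$ arising from a fixed point of $g$ is such that $\tfrac{1}{2}\lambda(e_j-e_k)$, or more precisely the vector making $s$ fix $\{x_j - x_k = \lambda\}$, lies in $\Lambda$.}

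To pin down $\lambda$ I would use that $b = \Tran(g)$ lies in $\Lambda$ (since $g \in W = G(r,p,n) \ltimes \Lambda$ and $\Lin(g) \in G(1,1,n) \subseteq G(r,p,n)$, the translational part is forced into $\Lambda$), so the constants $c_j$, and hence $\lambda$, are built out of $\ZZ$-combinations of components of lattice vectors; combined with the structure of $G(r,p,n)$-invariant lattices — every such $\Lambda$ contains $(1-\sigma)\Lambda$, and in the coordinate directions $e_{j-1} - e_j$ the lattice looks like $\ZZ[\xi](e_{j-1}-e_j)$ or $\ZZ[\xi]\tfrac{1}{1-\xi}(e_{j-1}-e_j)$ up to the listed cases — this will force the relevant half-integer translate back into $\Lambda$. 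I expect the main obstacle to be exactly this last bookkeeping step: verifying, uniformly across the possible invariant lattices $\Lambda$ (without case-splitting on Popov's list), that the translational part $b$ of $g$ has the divisibility needed to realize $s$ as an honest reflection in $W$. A clean way to finesse this is to replace $g$ by a power: since $V^g \subseteq V^{g^t}$, and $g^m$ has trivial linear part so is translation by $b_1 + \cdots + b_m = 0$ — giving nothing — one instead works with $g$ directly but notes that $g$ restricted to the invariant plane spanned by $e_j, e_k$ and the appropriate complement is, up to conjugacy in $W$, built from the rank-one and rank-two crystallographic groups already described in Section~\ref{rank-one}, where the Steinberg property holds trivially in rank $1$; transporting that back gives the reflecting hyperplane through $V^g$.
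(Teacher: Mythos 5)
Your overall strategy is the same as the paper's: conjugate so the cycle is in standard position, read off from the fixed-point equations that $x_j - x_k$ is a fixed constant $\lambda$ on $V^g$ for suitable indices, and exhibit an affine reflection in $W$ with linear part a transposition whose hyperplane $\{x_j - x_k = \lambda\}$ contains $V^g$. But the proof has a genuine gap at exactly the step you flag as the crux: you must show that the translation vector realizing this reflection actually lies in $\Lambda$, and you never do. The paper handles this by taking \emph{adjacent} indices, so that $\beta = x_{m+1}(\Tran(g))$ is a single coordinate of a lattice vector, and then verifying by direct inspection of each invariant lattice in Tables~\ref{StateTable} and~\ref{CoxeterTable} that $\beta(e_{m+1}-e_m) \in \Lambda$. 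This case-by-case check is not avoidable bookkeeping to be finessed; it is the content of the lemma, since for an arbitrary $G(r,p,n)$-invariant lattice a coordinate of a lattice vector need not a priori produce a lattice vector in the direction $e_m - e_{m+1}$.

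Your proposed workaround does not close the gap. The element $g$ does not preserve the plane $\CC e_j \oplus \CC e_k$ when the cycle has length greater than $2$, so "restricting $g$ to that plane" is not defined; and even in the length-$2$ case, invoking the Steinberg property of a rank-one crystallographic group tells you only that the nonregular points of \emph{that} group are its reflection points --- it says nothing about whether the specific affine transformation with linear part $(j\,k)$ and translational part $\lambda(e_j - e_k)$ is an element of $W$, which is precisely the lattice-membership question you need. Two smaller corrections: the translation making the transposition fix $\{x_j - x_k = \lambda\}$ pointwise is $\lambda(e_j - e_k)$, not $\tfrac{1}{2}\lambda(e_j - e_k)$ (one checks $s(v) = \tau(v) + \lambda(e_j - e_k)$ fixes that hyperplane); and choosing nonadjacent $j<k$ makes $\lambda$ a sum of several coordinates of $\Tran(g)$, which only complicates the lattice verification --- adjacent indices, as in the paper, keep it to a single coordinate.
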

\begin{proof}
The claim follows for
$n=1$, see Subsection~\ref{rank-one},
so we assume $n>1$.
After conjugation,
we may assume
$\Lin(g)=(m\  \ m+1\ 
\ m+2\ \cdots\ \ell)\neq 1$ in $\mathfrak{S}_n$ for some $1\leq m\leq n$, using
$\mathfrak{S}_n$-notation
for elements of $G(1,1,n)$.
Let $s$ be the affine reflection about the hyperplane
$H_s=\ker(x_m-x_{m+1}+\beta)$ in $V$ defined by
$$
s(v) = (m\ \ \ m+1)v + \beta (e_{m+1}- e_{m})
\quad\text{for}\ \ v \in V\, ,
$$
with $\beta=x_{m+1}(\Tran(g))\in \CC$. 
Then $\Tran(g)\in\Lambda$ implies $\beta (e_{m+1}- e_{m})\in \Lambda$ as well, upon inspection
of the possible lattices,
see Tables~\ref{StateTable} and~\ref{CoxeterTable},
and $s$ lies in $W$.
If $g$ fixes $u\in V$, then
$x_{m+1}(u)=x_m(u) + \beta$ and 
$u$ lies on 
the reflecting hyperplane 
$H_s$ of $W$.
\end{proof}

In the next proposition,
we see that Steinberg's theorem
holds for
all reflection groups
of the form
$W=W(A_{n-1})
\ltimes \Lambda$
for any
 $W(A_{n-1})$-invariant lattice 
 $\Lambda$ of full rank in $\CC^{n-1}$.
 
\begin{prop}
\label{symmetric}
Let $W$ be a crystallographic complex reflection group
whose linear part is the symmetric group $\mathfrak{S}_n$ in its
irreducible reflection representation
as the Weyl group 
$W(A_{n-1})$.
Then $W$ satisfies the Steinberg property.
\end{prop}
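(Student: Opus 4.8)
The plan is to reduce the statement to the cycle case already handled by Lemma~\ref{cyclecase}. We work with $W = W(A_{n-1})\ltimes \Lambda_\alpha$ acting on $V' = \CC\text{-span}\{e_{j-1}-e_j\}$, and we must show that every $g\in W$ with nontrivial fixed-point space $V'^g$ has that fixed-point space contained in a reflecting hyperplane of $W$. By the remarks at the end of Section~\ref{setup}, we are free to replace $g$ by any conjugate in $W$ and by any power of $g$, since $V'^g\subseteq V'^{g^j}$ for all $j$ and conjugation moves the fixed-point space within the same $\GL$-orbit. First I would observe that $\Lin(g)$ is a permutation $\sigma\in\mathfrak{S}_n$ and write $\sigma$ as a product of disjoint cycles. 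If $\sigma$ is a single nontrivial cycle, we are immediately done by Lemma~\ref{cyclecase}. So the work is entirely in reducing a general permutation (with fixed points and/or several cycles) to this case.

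The key idea is a power-and-conjugate argument on the cycle structure. Suppose $\sigma$ has a cycle of length $\ell_1$ and (possibly) other cycles of lengths $\ell_2,\ldots$. Pick a prime $q$ dividing one of the cycle lengths, say $\ell_1$; replacing $g$ by $g^{m}$ where $m=\prod_{i}\ell_i/q^{\,v_q(\ell_i)}\cdot(\text{suitable cofactor})$ one can arrange that $\Lin(g^m)$ is a product of $q$-cycles (all cycles of length exactly $q$) together with fixed points, and it still fixes everything $g$ fixed. Even more simply: for a well-chosen power, $\Lin(g^m)$ becomes a product of disjoint cycles of one common length $q$ plus fixed points; and since $V'^g\subseteq V'^{g^m}$ it suffices to treat this case. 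Now if $\Lin(g^m)$ has a fixed point $j\in\{1,\ldots,n\}$ while not being the identity, there is an index $k$ moved by $\Lin(g^m)$, and after relabeling via a conjugation we may assume $k-1$ or $k$ is fixed while the adjacent index is not — then I would produce, exactly as in the proof of Lemma~\ref{cyclecase}, an affine reflection $s\in W$ about $\ker(x_{j}-x_{k}+\beta)$ for the appropriate translation scalar $\beta$ coming from $\Tran(g^m)$, and check $V'^{g^m}\subseteq H_s$. Here one uses that the lattice $\Lambda_\alpha$ is spanned by the vectors $(\ZZ+\ZZ\alpha)(e_{j-1}-e_j)$, so any coordinate difference of a lattice vector in the relevant perpendicular line again lies in $\Lambda_\alpha$, which is what makes $s$ lie in $W$.

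That leaves the case where $\Lin(g^m)$ is a fixed-point-free product of disjoint $q$-cycles on $\{1,\ldots,n\}$; then $q\mid n$ and the cycles partition the index set. I would handle this by a further reduction: either two of the $q$-cycles can be linked through a transposition-type affine reflection of $W$ (because the translational part, read in two linked coordinates, again lands in $\Lambda_\alpha$), collapsing the analysis to the single-cycle Lemma; or, when $n=q$ so that $\Lin(g^m)$ is itself an $n$-cycle, we are already in the scope of Lemma~\ref{cyclecase}. The honest obstacle is bookkeeping: making the "power-and-conjugate" reduction precise so that at each stage the translational part still takes values compatible with $\Lambda_\alpha$ (i.e. the scalars $\beta$ appearing are in $\ZZ+\ZZ\alpha$), and ensuring that passing to powers does not accidentally kill the relevant cycle. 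The saving grace is that $\Lambda_\alpha$ is a direct sum over the "simple root" directions $e_{j-1}-e_j$ with the \emph{same} coefficient ring $\ZZ+\ZZ\alpha$ in each, so translations by $g$ decompose coordinatewise in a way that is stable under the permutation action — this uniformity is exactly what lets the reflection $s$ built from a single coordinate difference sit inside $W$, and it is the only structural fact about $\Lambda$ we need beyond what Lemma~\ref{cyclecase} already exploits.
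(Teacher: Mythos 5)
Your core mechanism is the right one --- build a transposition-type affine reflection $s$ about $\ker(x_j-x_k+\beta)$ with $\beta$ read off from the translational part, and use the uniform coefficient ring $\ZZ+\ZZ\alpha$ of $\Lambda_\alpha$ to see that $s\in W$ --- and that is exactly what the paper's proof uses. But your reduction scheme around it is both unnecessary and, in one place, broken. The problematic step is the fixed-point-free multi-cycle case, where you propose to ``link'' two distinct $q$-cycles by a transposition-type reflection. The equation $g^m(u)=u$ only yields relations of the form $x_{\sigma(k)}(u)=x_k(u)+x_{\sigma(k)}(\Tran(g^m))$ for $k$ and $\sigma(k)$ \emph{in the same cycle}; it gives no relation at all between coordinates indexed by different cycles, so there is no reason the difference $x_j(u)-x_k(u)$ for $j,k$ in distinct cycles lands in the coset needed to produce a reflection of $W$ through $u$. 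Similarly, in the mixed case your reflection $\ker(x_j-x_k+\beta)$ with $j$ fixed and $k$ moved is not what the fixed-point equation produces; the fixed index $j$ contributes nothing about $u$, only the moved pair $(k,\sigma(k))$ does.

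The fix is to notice that none of the power-and-conjugate bookkeeping is needed: the proof of Lemma~\ref{cyclecase} never uses that $\Lin(g)$ is a single cycle, only that there exists one index $k$ with $\sigma(k)\neq k$, and any nonidentity $g\in W$ with a fixed point has $\Lin(g)\neq 1$ (a nonzero pure translation fixes nothing), hence has such a $k$. The single reflection about $\ker(x_k-x_{\sigma(k)}+\beta)$, $\beta=x_{\sigma(k)}(\Tran(g))\in\ZZ+\ZZ\alpha$, already contains $V^g$. This is precisely the paper's proof (``we follow the proof of the last lemma''), modulo one point you also elide: the proposition concerns $W(A_{n-1})$ acting on $V'\cong\CC^{n-1}$, while the coordinates $x_j$ live on $\CC^n$; the paper passes to $G(1,1,n)\ltimes\Lambda_\alpha$ on $\CC^n=V'\oplus\CC(e_1+\cdots+e_n)$ and then descends to $V'$ using that the Steinberg property is preserved under direct sum. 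You should make that step explicit as well.
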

\begin{proof}
Consider
$W'=W(A_{n-1})\ltimes \Lambda_\alpha$ acting on $V'$
for some $\alpha\in \CC$
and let
$W=G(1,1,n)\ltimes \Lambda$ 
for $\Lambda$
the rank $2(n-1)$
lattice $\Lambda_\alpha$
regarded as a lattice
in $V =\CC^n$.
We follow the proof of the last lemma to see that  any vector
in $V$ fixed
by a group element of $W$ lies on a reflecting hyperplane for $W$.
Since $W$ is the direct sum of $W'$
and the trivial rank $1$
representation of $\mathfrak{S}_n$,
the claim follows for 
$W'$ as well.
\end{proof}
\section{Which groups have the Steinberg property?}
\label{groupspassing}
In this section, we determine those genuine
crystallographic
complex reflection groups  $G(r,1,n) \ltimes \Lambda$ which satisfy Steinberg's theorem.
We begin with a small lemma
that does not require
that $W$ be genuine. 
The lemma allows us to later reduce to arguments
on orbits under the action
of $1$-dimensional reflection groups.
In part~(3) below, we consider
orbits under the action of 
$G(r,1,1)\ltimes \Lambda'$ on $\CC$, with $G(r,1,1)=\langle \xi \rangle$ acting by multiplication
and
$\Lambda'\subset \CC$
acting by translation (identifying $\CC e_1$ with $\CC$). 
\begin{lemma}
\label{TechnicalLemma}
Let $W=G(r,p,n)\ltimes \Lambda$
for some
$G(r,p,n)$-invariant lattice $\Lambda$ in $\CC^n$. Consider any $g$ in $W$ and write
$\lambda_1,\ldots,
\lambda_n$ for the diagonal entries of\/ $\Lin(g)$.
Suppose one of the following holds:
\begin{itemize}
\item [(1)]
$\lambda_j
\not\in\{0, 1\}$ 
for some index $j$ and
$g^p\neq 1$ with $r\neq p$ and $ x_j(\Tran(g^p)) e_1\in \Lambda$;
\item [(2)]
$\lambda_j
\not\in\{0, 1\}$ 
for some index $j$ and $g^p=1$ with $r\neq p$ and
$x_j(\Tran(g))\, e_1\in \Lambda\ (1-\lambda_j)
\rule[-2ex]{0ex}{4ex}$;
\item[(3)]
$\lambda_j,
\lambda_\ell\not\in\{0, 1\}$  
for some $j\neq \ell$,
the lattice $\Lambda$
is invariant under $G(r,1,n)$, and
$$
\tfrac{x_j(\Tran(g))}{1-\lambda_j}\ 
\quad\text{ and }\quad \tfrac{x_\ell(\Tran(g))}
{1-\lambda_\ell}\ 
$$
lie
in the same orbit
under the action of\/
$G(r,1,1)\ltimes \Lambda'
$ on\/ \ $\CC$\/
for\/
$\Lambda'= x_1
\big(\CC(e_1-e_2)\cap\Lambda\big)$.
\end{itemize}
Then any vector fixed
by $g$ lies on a reflecting hyperplane of\/ $W$.
\end{lemma}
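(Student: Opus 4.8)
The plan is to reduce each of the three cases to finding an affine reflection $s$ in $W$ whose reflecting hyperplane $H_s$ contains the fixed vector $u$ of $g$. By Lemma~\ref{geometrylemma}, the affine reflections in $W=G(r,p,n)\ltimes\Lambda$ are precisely the maps $s(v)=\sigma(v)+b$ with $\sigma$ a central reflection in $G(r,p,n)$ and $b\in\Lambda\cap H_\sigma^\perp$; recall these $\sigma$ fix either a coordinate hyperplane $\ker(x_j)$ or a hyperplane $\ker(x_j-\zeta x_\ell)$ with $\zeta$ a power of $\xi$. Throughout I will use the standing observation from Section~\ref{setup} that one may replace $g$ by any conjugate or any power when locating $V^g$ inside a reflecting hyperplane, and I will use that, since $g$ fixes a point $u$, the equation $u=\Lin(g)u+\Tran(g)$ gives $x_j(u)=x_j(\Tran(g))/(1-\lambda_j)$ whenever $\lambda_j\ne1$, so the hypotheses are really statements about the coordinates $x_j(u)$ of the fixed point.

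For case~(1): since $\lambda_j\notin\{0,1\}$, the $j$-th diagonal entry of $\Lin(g^p)$ is $\lambda_j^p$, and because $r\ne p$ one checks $\lambda_j^p\ne1$ (the nonzero entries of $G(r,1,n)$ are $r$-th roots of unity, and $\lambda_j$ being a diagonal entry with $\lambda_j\ne1$ has order dividing $r$ but not dividing $p$). Thus $g^p$ also has a fixed point, namely $u$, and $x_j(u)=x_j(\Tran(g^p))/(1-\lambda_j^p)$. Now I take $\sigma$ to be the central reflection $\diag{1,\dots,\xi^{p},\dots,1}$ (nontrivial in the $j$-th slot, legitimate in $G(r,p,n)$ since its determinant-type condition is met), with $H_\sigma=\ker(x_j)$ and $H_\sigma^\perp=\CC e_j$. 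I need a translation vector $b=\mu e_j\in\Lambda$ with $x_j(u)=x_j(\Tran(g^p))/(1-\lambda_j^p)$ lying on $\ker(x_j-\text{something})$; more precisely, the reflection $s(v)=\sigma(v)+b$ fixes $H_\sigma+h$ where $h$ solves $h-\sigma h=b$, i.e. $x_j(h)=x_j(b)/(1-\xi^p)$, and I want this to equal $x_j(u)$. Using the hypothesis $x_j(\Tran(g^p))e_1\in\Lambda$ together with $\Lambda$ being $G(r,1,n)$-stable (so $x_j(\Tran(g^p))e_j\in\Lambda$), one produces the needed $b$ after adjusting by a suitable power and matching up roots of unity; the point is that $x_j(\Tran(g^p))e_j$ is an available translation and scaling it through the $1-\xi^p$ and $1-\lambda_j^p$ factors — which differ only by a unit root of unity — lands $x_j(u)$ on a reflecting hyperplane. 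Case~(2) is the same argument run with $g$ itself in place of $g^p$, the hypothesis $x_j(\Tran(g))e_1\in\Lambda(1-\lambda_j)$ being exactly what is needed to realize $x_j(u)=x_j(\Tran(g))/(1-\lambda_j)$ as the $x_j$-coordinate of a point on $\ker(x_j-x_j(b)/(1-\xi^p))$.

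For case~(3), where now $\lambda_j,\lambda_\ell\notin\{0,1\}$ for two indices and $\Lambda$ is $G(r,1,n)$-invariant: the fixed point has $x_j(u)=x_j(\Tran(g))/(1-\lambda_j)$ and $x_\ell(u)=x_\ell(\Tran(g))/(1-\lambda_\ell)$, and by hypothesis these two complex numbers lie in one orbit of the rank-$1$ group $G(r,1,1)\ltimes\Lambda'$ acting on $\CC$, with $\Lambda'=x_1(\CC(e_1-e_2)\cap\Lambda)$. So there is a power $\xi^t$ and an element $\nu\in\Lambda'$ with $x_\ell(u)=\xi^t x_j(u)+\nu$; unravelling $\nu\in\Lambda'$ means $\nu(e_1-e_2)\in\Lambda$, hence after applying a coordinate permutation (in $G(1,1,n)\subset G(r,1,n)$) $\nu(e_j-e_\ell)\in\Lambda$. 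Then I take $\sigma$ the reflection of $G(r,1,n)$ swapping coordinates $j,\ell$ with a twist by $\xi^t$, i.e. the transposition-type reflection fixing $\ker(x_j-\xi^{-t}x_\ell)$, and $b=\nu(e_j-e_\ell)\in\Lambda\cap H_\sigma^\perp$; the associated affine reflection $s(v)=\sigma(v)+b$ fixes the hyperplane $\ker(x_j-\xi^{-t}x_\ell+(\text{shift}))$ passing through $u$. One has to double-check that this $s$ actually lies in $W$ — the linear part $\sigma$ is a genuine reflection of $G(r,1,n)$ and $b\in\Lambda$, so it does — and that the constant shift works out, which it does because $x_\ell(u)-\xi^t x_j(u)=\nu$ by construction.

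The main obstacle I anticipate is bookkeeping with roots of unity and with which translations actually lie in $\Lambda$: in each case the candidate reflection $s$ is essentially forced, but verifying $b\in\Lambda\cap H_\sigma^\perp$ and that the resulting affine hyperplane genuinely contains $u$ requires carefully tracking the factors $1-\xi^p$, $1-\lambda_j$, $1-\lambda_j^p$ and using $G(r,1,n)$-invariance of $\Lambda$ to move translation vectors between coordinate slots. Passing to a power of $g$ in cases~(1)–(2) is the device that converts the awkward denominator $1-\lambda_j$ into the controlled $1-\xi^p$, and in case~(3) the orbit hypothesis is precisely engineered so that the off-diagonal reflection of $G(r,1,n)$ is available with an admissible lattice translation. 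No deeper structural input beyond Lemma~\ref{geometrylemma} and the explicit description of reflections in $G(r,p,n)\ltimes\Lambda$ should be needed.
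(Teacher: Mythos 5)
Your overall strategy---exhibiting in each case an explicit affine reflection $s\in W$ whose hyperplane passes through the fixed point $u$---is exactly the paper's, and your treatments of cases~(2) and~(3) essentially coincide with the published proof. (In case~(3) the translation vector must be a multiple of the $-1$-eigenvector $e_j-\xi^{\pm t}e_\ell$ of the weighted transposition rather than $\nu(e_j-e_\ell)$, but that is bookkeeping, and $G(r,1,n)$-invariance of $\Lambda$ still supplies the required lattice vector.)

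Case~(1), however, contains a genuine gap. You normalize the linear part of your candidate reflection to $\diag{1,\dots,\xi^p,\dots,1}$ and then need $b=\mu e_j\in\Lambda$ with $\mu=\tfrac{1-\xi^p}{1-\lambda_j^p}\,x_j(\Tran(g^p))$; you assert that the two factors ``differ only by a unit root of unity,'' but $(1-\xi^p)/(1-\lambda_j^p)$ need not even be an algebraic integer. For instance with $r=4$, $p=1$, $\lambda_j=-1$ one gets $(1-i)/2\notin\ZZ[i]$, so for $[G(4,1,n)]_1$ (where $\Lambda=\ZZ[i]^n$) the hypothesis $x_j(\Tran(g^p))e_1\in\Lambda$ does not place your $b$ in $\Lambda$, and the reflection you build need not lie in $W$. (Your parenthetical justification that $\lambda_j\neq 1$ forces $\lambda_j^p\neq 1$ because the order of $\lambda_j$ ``does not divide $p$'' is also unfounded---take $\lambda_j=-1$, $p=2$.) The paper avoids the rescaling entirely: it takes $\Lin(s)=\diag{1,\dots,\lambda_j^p,\dots,1}$, which lies in $G(r,p,n)$ since $(\lambda_j^p)^{r/p}=\lambda_j^r=1$, so the fixed-point equation $x_j(u)=\lambda_j^p\,x_j(u)+x_j(\Tran(g^p))$ says precisely that $u$ is fixed by $s(v)=\Lin(s)v+x_j(\Tran(g^p))\,e_j$, whose translational part is exactly the lattice vector the hypothesis hands you. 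Adopting that choice closes case~(1); the rest of your argument then stands.
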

\begin{proof}
We use Lemma~\ref{geometrylemma} throughout.
Suppose~(1) holds and
set $\beta=x_j(\Tran(g^p))$.
Any vector $u$ in $V$ fixed 
by $g$ is fixed by
$g^p$ and thus
satisfies $x_j(u)=\lambda^p_j x_j(u)+\beta$,
since
$\Lin(g^p)=(\Lin(g))^p$.
Hence $u$  
is fixed by the 
affine transformation $s$
defined by
$$
s(v)=\text{diag}(1,\ldots,1,
\lambda^p_j, 1,\ldots, 1)v+
\beta
e_j
\quad\text{ for } v \in \CC^n,
$$
with $\lambda^p_j$ as $j$-th entry.
Then $\Lin(s)\neq 1$ lies in $G(r,p,n)$
and $\Tran(s)=\beta e_j$
lies in $\Lambda$
since $\beta e_1$ lies
in $\Lambda$
and $\Lambda$ is invariant
under $\Lin(W)$. 
As the central reflecting hyperplane for $\Lin(s)$ is perpendicular to $\Tran(s)$, the transformation $s$ is in fact an affine reflection in $W$.


Now suppose (2) holds.
Any vector fixed by $g$ is
fixed by the affine reflection $s$ defined by
$$
s(v)=\text{diag}(1,\ldots,1,
\xi^p, 1,\ldots, 1)v+\beta e_j
\quad\text{ for } v \in \CC^n,
$$ 
with $\xi^p$ as $j$-th entry
and
$\beta=
\frac{1-\xi^p}
{1-\lambda_j}x_j(\Tran(g))
$.
Then $\Lin(s)$ lies in $ G(r,p,n)$. 
We set $\beta'=(1-\xi^p)^{-1}\beta$.
Then as
$\beta'
e_1$ lies in $\Lambda$, 
both $\beta'e_j$
and 
$\xi^p \beta'e_j$ lie in $\Lambda$ as well since  
the lattice $\Lambda$ is $\Lin(W)$-invariant.
Hence their difference,
$\Tran(s)=\beta e_j=
(1-\xi^p)\beta' e_j$, also lies in $\Lambda$
and $s$ is an affine reflection in $W$.

Lastly, say~(3) holds.
Since
the two given quotients
are in the same orbit,
their weighted difference
$$
\alpha\ =\
\tfrac{x_j(\Tran(g))}{1-\lambda_j}
\ -\ 
\xi^{m}\ \tfrac{x_\ell(\Tran(g))}{1-\lambda_\ell}
$$
lies in $\Lambda'$
for some $m\geq 0$,
i.e.,
$\alpha (e_1-e_2)\in\Lambda$. 
Since $\Lambda$ is $G(r,1,n)$-invariant,
$\alpha(
e_j-\xi^{-m}e_\ell)$
lies in $\Lambda$ as well.
Let $s$ be the affine transformation defined by
$$
s(v) = \sigma(v)
+ \alpha(
e_j-\xi^{-m}e_\ell)
\quad\text{ for  }
v\in V,
$$
where $\sigma\in G(r,p,n)$ is the weighted transposition 
$e_j\mapsto \xi^{-m} e_\ell$
and $e_\ell \mapsto \xi^{m}e_j$. Then $s$ is an affine reflection in $W$
since
$\Tran(s)$
is perpendicular to the
central reflecting hyperplane $H_\sigma$ of
$\sigma$.
We argue that any vector fixed 
by $g$ is fixed by $s$. 
Indeed, if $u\in V$ is fixed by $g$,
then 
$$
x_j(u)=\tfrac{x_j(\Tran(g))}
{1-\lambda_j}
\quad\text{and}\quad
x_\ell(u)=\tfrac{x_\ell(\Tran(g))}
{1-\lambda_\ell}\ .
$$
Since $\alpha=x_j(u)-\xi^m x_\ell(u)$, the vector
$u$ is fixed by $s$.

In all three cases, we see that any vector fixed by $g$ lies on an affine reflecting hyperplane for $W$.
\end{proof}

The next example shows
how we use use
orbits in $\CC$
to determine fixed point
spaces.  We also use this example
in the next proof.

\begin{example}
\label{[G(4,1,n)]_2}
Consider $W=[G(4,1,n)]_2$
with $n>1$ and some
$g$ in $W$ with nontrivial
fixed point space $V^g$
in $V=\CC^n$.  Suppose 
$\Lin(g)$ has two
diagonal entries
$\lambda_j$
and $\lambda_\ell$
that are 
$-1$. We claim that $V^g$
lies on a reflection hyperplane
for $W$.
Since $\Lin(g)$ is monomial,
$$
g(e_j)=- e_j+\beta_j
\quad\text{ and }\quad
g(e_\ell)=- e_\ell +\beta_\ell
$$
where $\beta_j=x_j(\Tran(g))$ and
$\beta_\ell=x_\ell(\Tran(g))$.
Here,
$\beta_j, \beta_\ell$
lie in the set
$\ZZ[i]/(1-i)=
\{(a+bi)/2: a,b
\text{ both even or
both odd}\}$.
If $\beta_j$ or $\beta_\ell$ lie in $\ZZ[i]$, then Lemma~\ref{TechnicalLemma}(1) 
implies the claim.
Hence we assume
$\beta_j$ and $\beta_\ell$ are both not in $\ZZ[i]$.
Then 
each can be written 
in the form 
$(a+bi)/2$ with $a,b$ both odd and thus identified
with the vertex $(a,b)$
of a square
in a tessellation
of the plane $\RR^2$
by squares of side length one, with one square centered at zero.  
Any vertex in this tessellation
can be obtained from any other
by an even number of horizontal and
 vertical translations (by one unit) together with some
rotations
of $90^{\circ}$.
Thus $\beta_j/2$ and $\beta_\ell/2$ 
lie in the same orbit
under the action of
$G(4,1,1)\ltimes
\Lambda'$
for\/ $\Lambda'
=
\ZZ[i]/(1-i)$.
Lemma~\ref{TechnicalLemma}(3) then implies the claim.
\end{example}

The next proposition
begins the analysis of
genuine crystallographic groups.

\begin{prop}
\label{[G(r,1,n)]_1}
The groups 
$[G(r,1,n)]_1$ and $[G(4,1,n)]_2$\/
for $r\geq 3$, $n\geq 1$ have the Steinberg property.
  \end{prop}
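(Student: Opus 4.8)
The plan is to fix a genuine crystallographic group $W = G(r,1,n)\ltimes\Lambda$ with $\Lambda$ the lattice $[G(r,1,n)]_1$ (for $r\geq 3$, $n\geq 1$) or the lattice $[G(4,1,n)]_2$, take an arbitrary $g\in W$ with $V^g\neq\emptyset$, and show $V^g$ meets a reflecting hyperplane of $W$. Since $V^g\subset V^{g^j}$ and conjugates of $g$ have conjugate fixed spaces, we are free to replace $g$ by any power or $W$-conjugate. The first reduction is on the permutation underlying $\Lin(g)$: as $\Lin(g)\in G(r,1,n)$ is a monomial matrix, its underlying permutation $\pi\in\mathfrak{S}_n$ decomposes into disjoint cycles. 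If $\pi$ is nontrivial, I would raise $g$ to a suitable power so that $\Lin(g^j)$ restricts to a genuine cycle on some block of coordinates; Lemma~\ref{cyclecase} then immediately produces a reflecting hyperplane of $W$ through $V^{g^j}\supset V^g$. So we reduce to the case where $\Lin(g)$ is \emph{diagonal}, say $\Lin(g)=\diag{\lambda_1,\dots,\lambda_n}$ with each $\lambda_j$ an $r$-th root of unity. Write $\beta_j=x_j(\Tran(g))$ and recall $g$ fixes $u$ iff $x_j(u)=\beta_j/(1-\lambda_j)$ for every $j$ with $\lambda_j\neq 1$, while $\beta_j=0$ whenever $\lambda_j=1$ (so that $V^g\neq\emptyset$ forces this). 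Since $\Lin(g)\neq 1$ (else $g$ is a translation and has no fixed point unless $g=1$), some $\lambda_j\neq 1$, and in fact $\lambda_j\neq 0$ as roots of unity.

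The heart of the argument is then to invoke Lemma~\ref{TechnicalLemma}. For the lattices $[G(r,1,n)]_1$ (which contain $\ZZ[\xi]e_1$) and $[G(4,1,n)]_2$, one checks directly that the relevant membership condition of part~(1) or part~(2) of that lemma holds: concretely, one computes $\Tran(g)$ or $\Tran(g^p)$ coordinate-by-coordinate. For $[G(r,1,n)]_1$ the lattice is $\ZZ[\xi]e_1+\sum_{j\geq 2}\ZZ[\xi](e_{j-1}-e_j)$, so $x_j(\Tran(g))e_1$ differs from a lattice vector by something supported on coordinate differences that get killed when we project to $x_j$; tracking this shows $x_j(\Tran(g^p))e_1\in\Lambda$ (or the order-dividing condition of part~(2) holds), and Lemma~\ref{TechnicalLemma}(1) or (2) finishes. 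The genuinely delicate case is $W=[G(4,1,n)]_2$ with $\Lambda$ containing the $\tfrac{1}{1-\xi}$-scaled differences: here $\beta_j$ can land in $\ZZ[i]/(1-i)$ without being in $\ZZ[i]$. This is exactly the situation analyzed in Example~\ref{[G(4,1,n)]_2}: when $\Lin(g)$ has two eigenvalues equal to $-1$, one passes to part~(3) of Lemma~\ref{TechnicalLemma} via the tessellation-of-$\RR^2$ orbit argument. I would reduce the general diagonal case for $[G(4,1,n)]_2$ to that situation — if only one $\lambda_j\notin\{0,1\}$ then part~(1)/(2) applies after suitable power-taking; if two or more do, take a power to make two of them equal and invoke part~(3) or Example~\ref{[G(4,1,n)]_2} directly.

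Concretely, the step ordering I would carry out is: (i) reduce to $\Lin(g)$ diagonal using Lemma~\ref{cyclecase}; (ii) for $[G(r,1,n)]_1$, pick any $j$ with $\lambda_j\neq 1$, verify $x_j(\Tran(g^{\,p}))e_1\in\Lambda$ (using that modding by the $\ZZ[\xi](e_{k-1}-e_k)$ part does not change $x_j$ up to a $\ZZ[\xi]$-multiple of $e_1$), and apply Lemma~\ref{TechnicalLemma}(1); (iii) for $[G(4,1,n)]_2$, first handle the case where some $\beta_j\in\ZZ[i]$ (or $g^p=1$ with the part~(2) divisibility) via Lemma~\ref{TechnicalLemma}(1)/(2), then handle the remaining case where all active $\beta_j$ lie in $\ZZ[i]/(1-i)\smallsetminus\ZZ[i]$ by taking a power of $g$ so that two eigenvalues equal $-1$ and quoting Example~\ref{[G(4,1,n)]_2}. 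The main obstacle I anticipate is the bookkeeping in (iii): making sure that after passing to a power of $g$ one still has two coordinates with eigenvalue $-1$ \emph{and} the corresponding translational coordinates still lie in the problematic coset, and checking that the orbit identification under $G(4,1,1)\ltimes\Lambda'$ with $\Lambda'=\ZZ[i]/(1-i)$ in Example~\ref{[G(4,1,n)]_2} genuinely covers all pairs — i.e., that every vertex of the odd-odd sublattice is reachable from every other by an \emph{even} number of unit translations together with $90^{\circ}$ rotations. Everything else is a routine lattice inspection against Table~\ref{StateTable}.
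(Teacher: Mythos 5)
Your overall architecture for the diagonal case (apply Lemma~\ref{TechnicalLemma}(1) for $[G(r,1,n)]_1$, and Example~\ref{[G(4,1,n)]_2} together with Lemma~\ref{TechnicalLemma}(3) for $[G(4,1,n)]_2$ after squaring to get two eigenvalues equal to $-1$) matches the paper and is sound. The gap is in your first reduction. You claim that when the underlying permutation $\pi$ of $\Lin(g)$ is nontrivial you can ``raise $g$ to a suitable power so that $\Lin(g^j)$ restricts to a genuine cycle'' and then quote Lemma~\ref{cyclecase}. That is false in general: take $\Lin(g)=\diag{\xi,1}\,(1\ 2)$ in $G(r,1,2)$. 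The product of the weights along a cycle is invariant under conjugation, and every power of this element with a nontrivial permutation part still carries a nontrivial weight; so no power and no conjugate is an unweighted cycle, and Lemma~\ref{cyclecase} --- whose hypothesis is that the linear part is literally a cycle in $G(1,1,n)$ --- never applies. Hence your conclusion ``so we reduce to the case where $\Lin(g)$ is diagonal'' does not follow as stated.

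The paper closes this with a dichotomy you are missing. Decompose $\Lin(g)$ into $\xi$-weighted cycles $c=t_m^{a_m}t_\ell^{a_\ell}(m\ \ m{+}1\ \cdots\ \ell)$. If the product of the weights is $1$, then $c$ is \emph{conjugate} (by a diagonal element of $G(r,1,n)$, not a power) to the unweighted cycle, and Lemma~\ref{cyclecase} applies to the corresponding block. If the product is $\lambda\neq 1$, then raising to the cycle length produces a scalar block $\lambda I$ of size at least $2\times 2$, which drops you into your diagonal case with two equal nontrivial eigenvalues --- exactly the situation your Example~\ref{[G(4,1,n)]_2} step is designed for. With that case split inserted, the rest of your outline goes through as in the paper; the remaining items you flag (the lattice membership checks and the single-orbit claim for the odd-odd vertices under $G(4,1,1)\ltimes\ZZ[i]/(1-i)$) are verified in the paper exactly as you anticipate.
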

\begin{proof} 
Let $W$ be one of the given groups.
The claim follows for
$n=1$, see Subsection~\ref{rank-one},
so we assume $n>1$.
Fix $g\neq 1$ in $W$
with nontrivial fixed point space $V^g$.  We assume $g$ itself is
not a reflection,
nor any power of $g$,
so $\Lin(g)$
and its powers are
also non-reflections by Lemma~\ref{geometrylemma}.  Note that $\Lin(g)\neq 1$.

{\bf Diagonal action.}
First suppose that $\Lin(g)$ is diagonal.
Then at least two
diagonal entries are
non-trivial.
In case $W=[G(r,1,n)]_1$, $\Tran(g)$
lies in $(\ZZ[\xi])^n$
(by inspection of $\Lambda
=\Tran(W)$)
and
Lemma~\ref{TechnicalLemma}(1)
implies that $u$
lies
on a reflecting hyperplane
for $W$.
In case $W=[G(4,1,n)]_2$,
we may replace $g$ by $g^2$
if warranted (as $V^g\subset V^{g^2})$
and assume at
least two diagonal entries of $\Lin(g)$ are $-1$.
Then by Example~\ref{[G(4,1,n)]_2}, any vector fixed
by $g$ lies on a reflecting hyperplane for $W$.

{\bf Cycle decomposition.}
Now assume
$\Lin(g)$ is not diagonal.
Every element of $G(r,1,n)$ is conjugate by an element of the
symmetric group $\mathfrak{S}_n$
to a disjoint product of nontrivial $\xi$-weighted cycles
(see~\cite{RamShepler})
$$
c=t_m^{a_m} t_\ell^{a_\ell} \
(m\ \ \ m+1\ \ m+2\ \ \cdots\ \ \ell)\, ,
\quad
$$ 
where each $t_m=\text{diag}(1, \ldots, 1, \xi, 1\, \ldots, 1)$, 
a diagonal matrix with $\xi$ as the $m$-th entry, 
and $a_m, a_\ell\in \ZZ_{\geq 0}$. 
Here, we identify
the cycle $(m\ \ m+1\ \cdots\ \ell)$ in $\mathfrak{S}_n$
with the corresponding permutation matrix in $G(1,1,n)$.  
We write $\Lin(g)$
as a product of such weighted cycles and fix attention
on one such
weighted cycle $c$ 
with $(m \ \ m+1\ \cdots\ \ell)$ nontrivial.

 {\bf Symmetric group action.}
Suppose that $\det(t_{m}^{a_m}\ t_\ell^{a_\ell})=1$. Then $c$ is conjugate
by $t_m^{-a_\ell}$
to the cycle $(m\ \ m+1\ \cdots \ \ell)$ 
as $1=\xi^{a_m+a_\ell}$
and we may assume $c$ is this cycle.
Then since $V^g \subset V^c$,
Lemma~\ref{cyclecase} implies
$V^g$ lies on a reflecting
hyperplane for $W$.

{\bf Diagonal power.}
Now suppose $\det(t_{m}^{a_m}\ t_{\ell}^{a_\ell})=\lambda\neq 1$.
Then $c^{m-\ell+1}$ in $G(r,1,n)$ is diagonal, as $m-\ell+1$ is the length of the cycle, and
$\Lin(g^{m-\ell+1})
=(\Lin(g))^{m-\ell+1}$ is block
diagonal with one
block itself a scalar matrix $\lambda I$ of size at least $2\times 2$.  The arguments for
the case when $\Lin(g)$ is diagonal above
then show that any
vector fixed by
$g^{m-\ell+1}$, and thus any fixed by $g$, lies
on a reflecting
hyperplane for $W$.
\end{proof}

\vspace{1ex}

\begin{remark}{
The idea for showing that
$[G(4,1,n)]_2$ has the Steinberg property in
the above proof 
of Theorem~\ref{[G(r,1,n)]_1}
does not apply 
to $[G(3,1,n)]_2$ as
the corresponding
tessellation 
of the plane $\RR^2$ 
from Example~\ref{[G(4,1,n)]_2}
has two orbits 
of vertices
under the $1$ dimensional group in Lemma~\ref{TechnicalLemma}(3). In fact, the group $[G(3,1,n)]_2$ does {\em not}
have the Steinberg property,
and Proposition~\ref{thosewithout}
below corrects a claim in~\cite{Puente}.}
\end{remark}

We find that even 
crystallographic groups 
of rank $2$ may require
some special analysis.  In the proof of the next proposition, we 
examine $1$-dimensional sublattice
structure.

\begin{prop}
\label{[G(6,2,2)]_2}
The group
$[G(6,2,2)]_2$ has the Steinberg property.
\end{prop}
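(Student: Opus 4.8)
The plan is to run the same general machinery as in Proposition~\ref{[G(r,1,n)]_1}, reducing to the structure of $\Lin(g)$ for an arbitrary $g\in W=[G(6,2,2)]_2$ with nontrivial fixed point space $V^g$, while assuming (as we may) that no power of $g$ is a reflection. Since $n=2$, the possibilities for $\Lin(g)\in G(6,2,2)$ are very limited: either $\Lin(g)$ is diagonal, or $\Lin(g)$ is a weighted transposition $e_1\mapsto\xi^{-a}e_2$, $e_2\mapsto\xi^a e_1$ for some $a$. In the weighted-transposition case, the determinant condition defining $G(6,2,2)$ forces the square $\Lin(g)^2$ to be the scalar matrix $\xi^{2a-?}I$ or the identity; either way one either lands in the diagonal case for $g^2$, or (when $\Lin(g)^2=I$ but $\Lin(g)\ne 1$) $g$ is conjugate in $W$ to a genuine affine reflection by Lemma~\ref{geometrylemma}(3) and the lattice inspection done in Lemma~\ref{cyclecase}-style arguments — so this case is handled exactly as the "symmetric group action" step was, just with a weight attached. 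So the crux is the diagonal case.

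So suppose $\Lin(g)=\diag{\lambda_1,\lambda_2}$ with both $\lambda_j\in\langle\xi\rangle$, $\xi=e^{2\pi i/6}$, and neither equal to $1$ (if one entry were $1$ and the other nontrivial, $g$ or a power is a reflection). Write $\beta_j=x_j(\Tran(g))$ so that $V^g$ is defined by $x_j(u)=\beta_j/(1-\lambda_j)$. I would first dispose of the case $\lambda_1\ne\lambda_2$ or $\lambda_j$ of order $3$ or $6$: here I expect Lemma~\ref{TechnicalLemma}(1) or (2) to apply after passing to a suitable power, because raising $g$ to the power $2$ or $3$ kills enough of the lattice denominators to land $\beta_j e_1$ (resp.\ $\beta_j e_1/(1-\lambda_j)$) back in $\Lambda$ — this is where the explicit lattice $\ZZ[2\xi](\xi e_1-e_2)+\ZZ[2\xi](1+\xi)(e_1-e_2)$ must be examined coordinatewise. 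The genuinely delicate remaining case is $\lambda_1=\lambda_2=-1$, i.e.\ $\Lin(g)=-I$, where no power trick helps and the single-coordinate criteria of Lemma~\ref{TechnicalLemma}(1),(2) are not available; one must instead use part~(3), which requires the lattice to be $G(6,1,2)$-invariant. The lattice $\Lambda$ here is \emph{not} $G(6,1,2)$-invariant, only $G(6,2,2)$-invariant, so I would either (a) check directly that $\Lambda$ is nevertheless invariant under the relevant diagonal scalar $t_1=\diag{\xi,1}$ needed for the orbit argument, or (b) argue by hand about the orbit of $\beta_1/2$ and $\beta_2/2$ under $G(6,2,2)\ltimes\Lambda'$ acting on the $1$-dimensional sublattice $\Lambda'=x_1(\CC(e_1-e_2)\cap\Lambda)$.

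Concretely, for the $\Lin(g)=-I$ case I would compute $\Lambda'=x_1\big(\CC(e_1-e_2)\cap\Lambda\big)$ explicitly from the two generators of $\Lambda$ — intersecting with the anti-diagonal line $\CC(e_1-e_2)$ — and determine the coset $\beta_j\in x_j(\Tran(W))=x_j(\Lambda)$ modulo $(1-\lambda_j)=2$, i.e.\ modulo $2x_j(\Lambda)$. The claim will reduce to: the two half-lattice points $\beta_1/2$ and $\beta_2/2$ always lie in a common orbit of $\langle\xi\rangle\ltimes\Lambda'$ acting on $\CC$. This is a finite check once the sublattice $\Lambda'$ and the residues of $x_1(\Lambda),x_2(\Lambda)$ mod $2$ are pinned down; the relevant reflection $s$ is then the weighted transposition $\sigma\colon e_1\mapsto \xi^{-m}e_2$ composed with translation by $\alpha(e_1-\xi^{-m}e_2)$ where $\alpha=\beta_1/2-\xi^m\beta_2/2\in\Lambda'$, exactly as in the proof of Lemma~\ref{TechnicalLemma}(3). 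I expect the main obstacle to be precisely this bookkeeping — verifying that the single orbit statement holds for this particular non-$G(6,1,2)$-invariant lattice, in contrast with $[G(6,3,2)]_2$ where the analogous count yields two orbits and Steinberg fails. I would close by noting that, having covered the diagonal case (including $-I$), the weighted-transposition case, and their powers, every $g\in W$ with $V^g\ne\emptyset$ and not a reflection has $V^g$ contained in a reflecting hyperplane of $W$, which is the Steinberg property.
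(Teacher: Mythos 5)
Your overall plan (reduce to diagonal linear parts and feed them to Lemma~\ref{TechnicalLemma}) is reasonable in outline, but there are concrete gaps that would sink the argument as proposed. First, you have the wrong lattice: Table~\ref{StateTable} gives $\Lambda=\ZZ[\xi](\xi e_1-e_2)+\ZZ[\xi](1+\xi)(e_1-e_2)$ for $[G(6,2,2)]_2$, with coefficients in $\ZZ[\xi]=\ZZ[\om]$; the ring $\ZZ[2\xi]$ you quote belongs to $[G(6,3,2)]_2$, the group for which Steinberg's theorem \emph{fails}. Since the entire proposition turns on which of these two lattices is in play, this is not a cosmetic slip. Second, your claim that the diagonal cases with entries of order $3$ or $6$ are ``disposed of'' by Lemma~\ref{TechnicalLemma}(1) or (2) after taking powers breaks down for the scalar elements $\Lin(g)=\diag{\om,\om}$ or $\diag{\om^2,\om^2}$, which lie in $G(6,2,2)$ and are not reflections. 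For this lattice $\CC e_1\cap\Lambda=(1-\om)\ZZ[\om]\,e_1$, so the membership hypotheses of parts (1) and (2) fail for generic translational parts, and part (3) is unavailable because $\Lambda$ is not $G(6,1,2)$-invariant. This scalar case is precisely where the paper's proof does its real work: it lists the reflecting hyperplanes explicitly, notes that the ``even'' transposition mirrors $x_1-\xi^j x_2=(1-\om)\gamma$ carry the smaller translation sublattice $(1-\om)\ZZ[\om]$, and runs a pigeonhole argument on the three cosets of $(1-\om)\ZZ[\om]$ in $\ZZ[\om]$. Your case analysis never sees these hyperplanes, and your ``delicate case'' radar flags only $-I$.

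Third, even for $\Lin(g)=-I$, your reduction to ``$\beta_1/2$ and $\beta_2/2$ lie in a common orbit of $\langle\xi\rangle\ltimes\Lambda'$'' can only ever produce transposition-type reflections. The paper's argument for this case is a pigeonhole on the four cosets of $\ZZ[\om]$ in $(1/2)\ZZ[\om]$ applied to the five forms $x_1$, $x_2$, $x_1+x_2$, $x_1+\om x_2$, $x_1+\om^2 x_2$, and it needs the coordinate hyperplanes $x_i=\gamma$ as possible landing spots alongside the transposition mirrors; your single-orbit claim is therefore stronger than what is needed and is left entirely unverified. None of these issues is obviously unfixable, but each requires exactly the explicit hyperplane and sublattice bookkeeping your proposal defers; the paper's proof is a direct coset-counting computation against an explicit list of mirrors rather than an application of Lemma~\ref{TechnicalLemma}.
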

\begin{proof}
Suppose $u$
in $V$
is fixed by some
nonreflection 
$g\neq 1$ in $W=[G(6,2,2)]_2$.
Let $\om=\xi^2=e^{2\pi i/3}$.
One may check that for any
$\gamma$ in $\ZZ[\om]=\ZZ[\xi]$, $W$
includes reflections about affine hyperplanes
\begin{equation}\label{somehyperplanes}
x_1=\gamma,\quad
x_2=\gamma,\quad
x_1-\xi^j x_j = \gamma\ \
\text{ for $j$ odd},\quad
x_1-\xi^j x_j= (1-\om)\gamma\ \ \text{ for  $j$ even}.
\end{equation}
We argue that $u$ lies on one of these hyperplanes
by applying the eight linear forms
$x_1, x_2, x_1-\xi^j x_2$
to $u$ and showing one of the resulting values lies in
$\ZZ[\om]$ 
or in $(1-\om)\ZZ[\om]$
(for $j$ even).

By Lemma~\ref{geometrylemma}(c),
$\Lin(g)$ 
is not
a reflection
and we may assume that
$\Lin(g)$ is $\diag{\omega,\omega^2}$, 
$\diag{-1,-1}$, 
or $\diag{\omega^2,
\omega^2}$
by replacing $g$
by a power of $g$ if necessary. Fix $\alpha,\beta\in\ZZ[\omega]$ with
$$\Tran(g)=\alpha\begin{pmatrix} -\omega^2\\ -1 \end{pmatrix}+\beta(1-\omega^2)\begin{pmatrix} 1\\ -1 \end{pmatrix}. $$

First, suppose $\Lin(g)=\diag{\omega,\omega^2}$. Then
$x_1(u)+\omega x_2(u)=\beta\in \ZZ[\om]$ and
$u$ lies on a reflecting hyperplane
of~(\ref{somehyperplanes}).

Second, suppose
$\Lin(g)=\diag{-1,-1}$.
If $x_2(u)\in\ZZ[\omega]$,
then $u$ lies on a reflecting hyperplane
of~(\ref{somehyperplanes}),
so we assume $x_2(u)\notin\ZZ[\om]$.
As $u$ is fixed by $g$,
we claim
applying the five linear forms
$x_1, x_2, x_1+x_2, x_1+\om x_2, x_1+\om^2 x_2$ to $u$
gives numbers in $(1/2)\ZZ[\om]$
which lie in distinct cosets of the subgroup $\ZZ[\om]$ 
of the additive group $(1/2)\ZZ[\om]$.
Indeed, overlapping cosets would imply that $x_2(u)=(1/3)(1-\om^2)(1-\om)x_2(u)$
lay in $(1/3)\ZZ[\om] \cap (1/2)\ZZ[\om]=\ZZ[\om]$.
As the subgroup $\ZZ[\om]$
has index $4$ in the larger group,
one of these cosets is trivial.
Thus 
$x_1(u)$, $x_2(u)$, $x_1+x_2(u)$, $x_1+\omega x_2(u)$,  or $x_1+ \omega^2x_2(u)$
lies in $\ZZ[\om]$
and
$u$ lies on some reflecting hyperplane
of~(\ref{somehyperplanes}).

Third, suppose $\Lin(g)=\diag{\omega^2,\omega^2}$. 
We apply the three linear forms
$x_1-x_2, x_1-\om x_2, x_1-\om^2 x_2$ to $u$ and obtain
complex numbers
$\alpha+2\beta$, $-\om^2(\alpha+\beta)$,
$-\om\beta$.
We argue that one of
these numbers lies in $(1-\om)\ZZ[\om]$
and thus $u$ lies
on a reflecting hyperplane
of~(\ref{somehyperplanes}).
If $\beta$ itself lies
in $(1-\om)\ZZ[\om]$,
then so does $-\om\beta$,
so we assume $\beta\notin \ZZ[\om]$.
Then $2\beta\notin\ZZ[\om]$
as well, since
$(1-\om)\ZZ[\om]=(1-\om^2)\ZZ[\om]=\{a+b\omega:\,a+b\equiv 0 \mod 3\}$. 
This implies that the complex
numbers 
$\alpha$, $\alpha+\beta$,
and $\alpha+2\beta$ lie
in different cosets of the subgroup $(1-\om)\ZZ[\om]$
of the additive group
$\ZZ[\om]$.
As this subgroup
has index $3$ in the larger group,
one of these cosets is trivial, and thus 
$\alpha$, $\alpha+\beta$,
or $\alpha+2\beta$ 
lies in $(1-\om)\ZZ[\om]$.
But $(1-\om)\ZZ[\om]$
is closed under multiplication
by $\om$, hence 
$\alpha+2\beta$, $-\om^2(\alpha+\beta)$,
or $-\om\beta$ must 
lie in $(1-\om)\ZZ[\om]$
as well.
\end{proof}

\begin{prop}
  Suppose $W=G\ltimes \Lambda$ is a
  crystallographic reflection group
  acting on\/ $\CC^n$ for $n\geq 1$ 
  with $G$ equal to
  $G(4,2,n)$, $G(6,2,n)$, or $G(6,3,n)$
  but $W\neq[G(6,3,2)]_2$.
  Then $W$ has the Steinberg property.
\end{prop}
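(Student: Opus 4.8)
The plan is to run the strategy from the proof of Proposition~\ref{[G(r,1,n)]_1}, using throughout that $r\neq p$ for each of $G(4,2,n)$, $G(6,2,n)$, $G(6,3,n)$ (so parts~(1) and~(2) of Lemma~\ref{TechnicalLemma} apply) and that $\mathfrak{S}_n=G(1,1,n)$ lies in $G(r,p,n)\subseteq W$ (so we may conjugate by permutation matrices inside $W$). The case $n=1$ is Subsection~\ref{rank-one}, and $W=[G(6,2,2)]_2$ is already covered by Proposition~\ref{[G(6,2,2)]_2}; so assume $n\geq 2$ and $W\neq[G(6,2,2)]_2$. Fix $g\neq 1$ in $W$ with nontrivial fixed point space $V^g$, pick $u\in V^g$, and assume no power of $g$ is a reflection, so by Lemma~\ref{geometrylemma} $\Lin(g)\neq 1$ and no power of $\Lin(g)$ is a reflection.

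If $\Lin(g)$ is not diagonal, I would argue as in Proposition~\ref{[G(r,1,n)]_1}: after conjugating by a permutation, $\Lin(g)$ is a disjoint product of $\xi$-weighted cycles; fix one, $c$, with nontrivial underlying cycle $(m\ \ m+1\ \cdots\ \ell)$. If the product of the nonzero entries of $c$ equals $1$, then $V^g\subseteq V^c$ and, after conjugating $c$ to the plain cycle $(m\ \ m+1\ \cdots\ \ell)$, Lemma~\ref{cyclecase} finishes this case; the conjugation can be performed inside $W$ since $G(r,p,n)$ is normal in $G(r,1,n)$ and, for the lattices at hand (in the $G(6,\cdot,n)$ cases one checks $\Lambda=\ZZ[\om]^n$), $\Lambda$ is $G(r,1,n)$-invariant. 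Otherwise that product is some $\lambda\neq 1$, the $(\ell-m+1)$-st power of $c$ equals the scalar $\lambda$ on the support of the cycle (a block of size at least two), and replacing $g$ by $g^{\ell-m+1}$ reduces to the diagonal case.

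In the diagonal case $\Lin(g)=\diag{\lambda_1,\ldots,\lambda_n}$ with $r$-th roots of unity $\lambda_j$; since $\Lin(g)$ is a non-identity non-reflection in $G(r,p,n)$, at least two $\lambda_j$ differ from $1$. If some $\lambda_j$ is not a $p$-th root of unity, then the $j$-th entry $\lambda_j^{\,p}$ of $\Lin(g^p)=\Lin(g)^p$ is $\neq 1$ and $g^p\neq 1$; checking $\ZZ[\xi]e_1\subseteq\Lambda$ (or, for $[G(4,2,n)]_1$, directly that $x_j(\Tran(g^p))e_1\in\Lambda$) lets Lemma~\ref{TechnicalLemma}(1) put $u$ on a reflecting hyperplane. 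If instead every $\lambda_j$ is a $p$-th root of unity, then $\Lin(g)^p=1$, and since $g$ fixes $u$ the pure translation $g^p$ also fixes $u$, forcing $g^p=1$. Setting $\beta_j=x_j(\Tran(g))/(1-\lambda_j)=x_j(u)$ for the indices with $\lambda_j\neq 1$: if some $\beta_j\in\ZZ[\xi]$ then Lemma~\ref{TechnicalLemma}(2) applies, and otherwise I would use Lemma~\ref{TechnicalLemma}(3) on two such indices $j\neq\ell$, for which it suffices that $\beta_j$ and $\beta_\ell$ lie in a common orbit of $G(r,1,1)\ltimes\Lambda'$ acting on $\CC$. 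For the lattices in play, $\Lambda'$ is $\ZZ[\om]$ (or the analogous ring of integers), the non-integral $\beta_j$ all lie in a fractional ideal whose quotient by $\ZZ[\om]$ is $\ZZ[\om]/\pi\ZZ[\om]$ or $\ZZ[\om]/2\ZZ[\om]$, and multiplication by $\xi$ permutes the nonzero residues in a single cycle — so there is exactly one nonzero orbit and the two quotients match. This is the mechanism of Example~\ref{[G(4,1,n)]_2} and Proposition~\ref{[G(6,2,2)]_2}, and it is exactly where $[G(3,1,n)]_2$ and $[G(6,3,2)]_2$ fail (two nonzero orbits) while ours succeed.

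The main obstacle will be the lattices for which $\ZZ[\xi]e_1\not\subseteq\Lambda$ or $\Lambda$ fails to be $G(r,1,n)$-invariant, above all $[G(4,2,2)]_3$ (and to a lesser extent $[G(4,2,n)]_1$ and $[G(4,2,n)]_2$), where neither Lemma~\ref{TechnicalLemma}(1) nor~(3) is directly available in the subcase that $\Lin(g)$, or a power of it, is a scalar $\lambda I$ with $\lambda\in\{i,-i\}$. I expect to handle these by a direct coset computation in the style of Proposition~\ref{[G(6,2,2)]_2}: after replacing $g$ by a power to reduce its linear part to a short explicit list, evaluate the linear forms $x_1$, $x_2$, and $x_1-\eta x_2$ (for $\eta$ a root of unity) at $u$ and show one of the values lands in the coset lattice cut out by a reflection of $W$.
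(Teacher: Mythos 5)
Your treatment of $G(6,2,n)$ and $G(6,3,n)$ is essentially the paper's argument: reduce to the diagonal case via the weighted-cycle decomposition and Lemma~\ref{cyclecase}, then apply Lemma~\ref{TechnicalLemma}(1) when $g^p\neq 1$ and parts (2)/(3) when $g^p=1$, using that the quotient $X/\ZZ[\om]$ for $X=\ZZ[\om]/(1-\xi^{6/p})$ has a single nonzero orbit under multiplication by $\xi$ (indeed $\xi\equiv 1+\om$ generates $\mathbb{F}_4^*$ and $\xi\equiv -1$ generates $\mathbb{F}_3^*$). That part, together with delegating $[G(6,2,2)]_2$ to Proposition~\ref{[G(6,2,2)]_2}, is sound and matches the paper.

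The gap is in the $G(4,2,n)$ cases, and it is not merely a matter of unfinished computation: the mechanism you propose fails there. For the lattices of $[G(4,2,n)]_1$ and $[G(4,2,n)]_2$ one has $\Lambda'=\ZZ[i]$ and the non-integral quotients $\beta_j$ live in $\tfrac12\ZZ[i]$, so the relevant quotient is $\ZZ[i]/2\ZZ[i]$, where multiplication by $i$ has \emph{two} orbits on the three nonzero classes, namely $\{1,i\}$ and $\{1+i\}$ (since $i(1+i)\equiv 1+i \bmod 2$). So ``exactly one nonzero orbit'' is false for $r=4$, and Lemma~\ref{TechnicalLemma}(3) cannot be invoked as you describe; closing the case requires either showing the residue $1+i$ cannot occur (true for $[G(4,2,n)]_1$ because $\Lambda\cap\CC e_j=(1-i)\ZZ[i]e_j$ forces odd-norm residues) or noticing that for $[G(4,2,n)]_2$ the hyperplane $x_j=x_j(u)$ is already reflecting because $\Lambda=\ZZ[i]^n$ --- neither of which your sketch supplies. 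Likewise $[G(4,2,2)]_3$ is left as ``I expect to handle these by a direct coset computation.'' The paper avoids all of this with an inheritance argument you did not find: $[G(4,2,n)]_2\subset[G(4,1,n)]_1$ and $[G(4,2,n)]_1\subset[G(4,1,n)]_2$ with \emph{identical} sets of reflecting hyperplanes, so any point nonregular for the subgroup is nonregular for the overgroup and the Steinberg property transfers down directly from Proposition~\ref{[G(r,1,n)]_1}; for $[G(4,2,2)]_3$ it then gives the one explicit computation $x_1(u)+ix_2(u)=\beta\in\ZZ[i]$ and exhibits a reflection of $W$ with mirror $\ker(x_1+ix_2-\beta)$. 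You should either adopt that inheritance step or carry out the residue analysis above case by case.
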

\begin{proof}
The claim follows for
$n=1$, see Subsection~\ref{rank-one},
so we assume $n>1$.
Note that the
affine reflecting hyperplanes
for groups $[G(4,2,n)]_2$ and $[G(4,1,n)]_1$ 
coincide and
$[G(4,2,n)]_2\subset [G(4,1,n)]_1$ 
(see~\cite{Malle}).
Similarly, the affine reflecting  hyperplanes for
$[G(4,2,n)]_1$
and $[G(4,1,n)]_2$ 
also coincide and
$[G(4,2,n)]_1\subset [G(4,1,n)]_2$.
The groups
$[G(4,2,n)]_1$ and $[G(4,2,n)]_2$ 
thus inherit the Steinberg property
from $[G(4,1,n)]_1$ and $G[(4,1,n)]_2$,
which have the property by
Proposition~\ref{[G(r,1,n)]_1}. 
(Note that $[G(4,1,n)]_1$ and $[G(4,1,n)]_2$ do not have the same set of hyperplanes.)
The group $W=[G(6,2,2)]_2$ has the Steinberg property by 
Proposition~\ref{[G(6,2,2)]_2}.

Fix
some non-reflection $g\neq 1_W$ in $W$ with nontrivial
fixed point space $V^g$
and note $\Lin(g)$ is also
not a reflection
by Lemma~\ref{geometrylemma}(3).

First suppose $W=[G(4,2,2)]_3$.
Then 
$\Lin(g^m)=\diag{-1,-1}$
for $m=1$ or $2$ and
$$\Tran(g^m)=\alpha\begin{pmatrix} i\\ -1 \end{pmatrix}+\beta(1+i)\begin{pmatrix} 1\\ -1 \end{pmatrix}$$ 
for some $\alpha,\beta\in \ZZ[i]$.  
If $g$ fixes $u$ in $V$, then 
$x_1(u)+ix_2(u)=\beta$ 
and $u$ is fixed by the affine
reflection
$s$ with $\Lin(s)$ sending
$e_1$ to $-ie_2$
and $e_2$ to $ie_1$
and $\Tran(s)=\beta e_1-i\beta e_2$.
The reflection $s$ lies in $W$
since
$\Tran(s)=(-\beta)(i, -1)
+ \beta(1+i)(1, -1)$
lies in $\Lambda$.
Thus
$[G(4,2,2)]_3$ has the
Steinberg property.

Now suppose $W=[G(6,p,n)]_1=G(6,p,n)\ltimes\Lambda$
for some lattice $\Lambda$ in 
Table~\ref{StateTable}. 
First assume $\Lin(g)$ is diagonal.
If $g^p\neq 1$, then Lemma~\ref{TechnicalLemma}(1)
applies, so we assume $g^p= 1$.
Then $\Lin(g)$ has
two nontrivial diagonal entries
$\lambda_j$ and $\lambda_\ell$
which are $p$-th roots-of-unity
as $(\Lin(g))^p=\Lin(g^p)=1$.
Consider
$$
\alpha_j=\tfrac{x_j(\Tran(g))}
{1-\lambda_j}
\quad\text{and}\quad
\alpha_\ell=\tfrac{x_\ell(\Tran(g))}
{1-\lambda_\ell}\ 
\quad\text{ in }\CC.
$$
One may check
directly (using that
$r=6$ and $p=2$ or $p=3$) that
$\alpha_j,\alpha_{\ell}$ must both lie in the set $X=\ZZ[\om]
/(1-\xi^{6/p})$. 
There are two orbits in $X$ under the action of $G(6,1,1)\ltimes\ZZ[\om]$ on $\CC$
with $\ZZ[\om]$
one of the orbits.
If $\alpha_j$ or $\alpha_{\ell}$ 
lie in different orbits,
then either
$\alpha_j$ or $\alpha_{\ell}$ lies in $\ZZ[\om]=x_1(\CC e_1\cap\Lambda)$
and Lemma~\ref{TechnicalLemma}(2) applies. 
Otherwise, both
$\alpha_j$ or $\alpha_{\ell}$ 
lie in the same orbit
and Lemma~\ref{TechnicalLemma}(3) applies
with $\Lambda'=x_1(\CC(e_1-e_2)\cap\Lambda)=\ZZ[\om]$. 
We conclude that any vector fixed by $g$  lies on a reflecting hyperplane for $W$ when $\Lin(g)$ is diagonal.
When $\Lin(g)$ is not
diagonal, we use arguments
as in the proof
of Proposition~\ref{[G(r,1,n)]_1}.
(Note that we may work up to conjugation by any element
in $G(6,1,n)$ since $G(6,p,n)$ is normal in $G(6,1,n)$ and $\Tran(W)$
 is $G(6,1,n)$-invariant;
 applying $G(6,1,n)$ to any reflecting hyperplane for $W$ will produce another reflecting hyperplane for $W$ although not necessarily in the same 
 $W$-orbit.)
  
 \end{proof}

\section{Genuine groups failing Steinberg's theorem}
\label{groupsfailing}

We now determine genuine crystallographic affine reflection groups in the 4-parameter infinite family failing Steinberg's theorem.
There are also genuine crystallographic groups $W$ with $\text{Lin}(W)\neq G(r,p,n)$ which
fail to satisfy the Steinberg property,
$[G_4]_1$ for example,
  see Cote~\cite{Cote}.
We set $\omega=e^{2\pi i/3}$
in Table~\ref{counterexamples}.

\newcommand{\Crule}{\rule[-2ex]{0ex}{5ex}}
\begin{table}[htpb]
\caption{Genuine Crystallographic  groups failing
Steinberg's theorem}
\label{counterexamples}
\centering
\begin{tabular}{p{.2\textwidth} p{.3\textwidth} p{.17\textwidth}}
\toprule
\text{\bf Group} $W$ & 
\multicolumn{2}{l}{
$g \in W$ 
\text{\bf with} $V^g\not\subset$ 
\text{\bf union of reflecting hyperplanes}
\rule[-1ex]{0ex}{3.5ex}
}
\\ 
\midrule
$[G(3,1,2)]_2$ & 
$g(v)= \begin{smallpmatrix} 
\omega& \hphantom{xx}  
\\ 
\hphantom{xx}  
&\omega \end{smallpmatrix}v
+\begin{smallpmatrix}  
\ 1/(1-\omega)\\  
-1/ (1-\omega)  \end{smallpmatrix}$
& \text{for} \ \ $v\in \CC^2$
\Crule
\\ 
$[G(3,3,3)]_1$ &
$g(v)=\begin{smallpmatrix} \omega& & \\ &\omega& \\&&\omega\end{smallpmatrix}v
+\begin{smallpmatrix} \  \ 1\\-1 \\ \ \ 0\end{smallpmatrix}$
& \text{for} \ \ $v\in \CC^3$
\Crule
\\ \hline

$[G(4,4,3)]_1$ & $g(v)=\begin{smallpmatrix} i& & \\ &-1& \\&& i\end{smallpmatrix}v 
+\begin{smallpmatrix}\ \ 1\\-1 \\ \ \ 0\end{smallpmatrix}$
& \text{for} \ \ $v\in \CC^3$
\Crule
\\ \hline
$[G(6,3,2)]_2$ & 
$g(v)= \begin{smallpmatrix} -1& \\ &-1\end{smallpmatrix}v
+\begin{smallpmatrix} 1\\
\rule[0ex]{0ex}{0ex}
\omega-1 \end{smallpmatrix}$
& \text{for} \ \ $v\in \CC^2$
\Crule
\\ 
$[G(6,6,3)]_1$ & 
$g(v)= \begin{smallpmatrix} \omega& & \\ &-1& \\&&-\omega^2\end{smallpmatrix}v
+\begin{smallpmatrix}\ \ 1\\-1 \\ \ \ 0\end{smallpmatrix}$
& \text{for} \ \ $v\in \CC^3$
\Crule
\\ 
\bottomrule
\end{tabular}

\end{table}

The next proposition also
holds for low values
of $r$ and $n$ if we exclude
the group $W=[G(2,2,3)]_1^\alpha$.
Such groups have linear parts
which are 
finite Coxeter groups
and appear in Section~\ref{Coxetersection}.

\begin{prop}\label{thosewithout}
  The groups $[G(r,r,n)]_1$ for $r\geq 3$ and $n\geq 3$,
  $[G(3,1,n)]_2$ for $n\geq 2$,
        and $[G(6,3,2)]_2$
    do not have the Steinberg property.
\end{prop}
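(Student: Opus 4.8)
The plan is to prove each non-membership in the Steinberg class by exhibiting one witness: an element $g\in W$ whose fixed-point set $V^g$ is nonempty yet not contained in the union of the reflecting hyperplanes of $W$. Representatives for the least value of $n$ in each family are collected in Table~\ref{counterexamples}; for larger $n$ one extends such a $g$ by letting $\Lin(g)$ act as the identity on the extra coordinates while keeping the same $\Tran(g)$. In each case $\Lin(g)$, restricted to a coordinate subspace $U$ spanned by two or three of the $e_i$, is a diagonal matrix with no eigenvalue equal to $1$ (indeed a scalar matrix except for $[G(4,4,n)]_1$ and $[G(6,6,n)]_1$), while on the remaining coordinates it is the identity. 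Hence $1-\Lin(g)$ is invertible on $U$, the restriction of $g$ to $U$ has the single fixed point $u$ with $x_i(u)=x_i(\Tran(g))/(1-\lambda_i)$ for $i$ indexing $U$, and $V^g$ is the affine subspace of $V$ obtained by letting the coordinates outside $U$ run freely; in particular $V^g\neq\emptyset$. Inspecting the eigenvalues of $\Lin(g)$ shows that no power of $g$ is a reflection (each nontrivial power of $\Lin(g)$ still has at least two eigenvalues $\neq 1$), so by Lemma~\ref{geometrylemma} the assertion is not vacuous.

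The second step is to describe the reflecting hyperplanes of $W=G(r,p,n)\ltimes\Lambda$ explicitly. By Lemma~\ref{geometrylemma} and the description of the affine reflections of such a group, every reflecting hyperplane is a translate, in the perpendicular direction, of a central reflecting hyperplane of $G(r,p,n)$; so, with $w$ a normal vector to such a central hyperplane, the affine hyperplanes that arise are exactly the $\{v:\langle v,w\rangle=c\}$ for which $c$ lies in a $\ZZ[\xi]$-submodule $M_w$ of $\CC$ which, up to a scalar depending on the central reflection, is the module $\{\mu\in\CC:\mu w\in\Lambda\}$. Consequently $V^g$ meets a reflecting hyperplane of $W$ exactly when one of the coordinate forms $x_j$, or one of the weighted transposition forms $x_j-\zeta x_k$ with $\zeta$ a root of unity, evaluated at $u$, belongs to the corresponding module; and such a form is constant along $V^g$ only when all the coordinates it involves lie in the small ``moved'' set $U$. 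Since $|U|\leq 3$ this is a finite check, independent of $n$: evaluate $x_j(u)$ and $(x_j-\zeta x_k)(u)$ for indices in $U$ and show each avoids the corresponding module $M_w$.

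The third step is the arithmetic, in $\ZZ[\omega]$, $\ZZ[i]$, and $\ZZ[2\xi]$ as appropriate. For $[G(3,3,n)]_1$, $[G(4,4,n)]_1$, $[G(6,6,n)]_1$ and $[G(3,1,n)]_2$, each relevant module $M_w$ is $\ZZ[\xi]$ or the fractional ideal $\tfrac{1}{1-\xi}\ZZ[\xi]$ (with $\xi=\omega$ or $i$), while the numbers $x_j(u)$ and $(x_j-\zeta x_k)(u)$ all have the form $\tfrac{c}{(1-\xi)^a}$ with $a\in\{1,2\}$, or $\tfrac{c}{2}$, where $c$ is a unit or equals $2$; since $1-\xi$ divides neither $1$ nor $2$ in $\ZZ[\xi]$ and $2$ divides no unit, none of these values lands in its module, so $V^g$ avoids every reflecting hyperplane. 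The delicate case is $W=[G(6,3,2)]_2$, with $g(v)=-v+\bigl(e_1+(\omega-1)e_2\bigr)$ and $u=\tfrac12\bigl(e_1+(\omega-1)e_2\bigr)$: here $x_1(u)$, $x_2(u)$ and four of the six forms $x_1-\zeta x_2$ give values in $\tfrac12\ZZ[\omega]\setminus\ZZ[\omega]$, hence cause no trouble, but for the two roots $\zeta=\pm\omega$ one obtains the algebraic integers $(x_1-\omega x_2)(u)=-\omega^2$ and $(x_1+\omega x_2)(u)=-\omega$. The point is that, because $\Lambda$ here has coefficient ring $\ZZ[2\xi]=\ZZ+2\xi\ZZ$ strictly smaller than $\ZZ[\omega]$, the module $M_w$ for each of those two directions is only $\ZZ[2\xi]$, which contains neither $-\omega^2=\xi$ nor $-\omega=1-\xi$. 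I expect the computation $M_w=\ZZ[2\xi]$ for $[G(6,3,2)]_2$, showing the relevant module is the strictly coarser $\ZZ[2\xi]$ rather than $\ZZ[\omega]$, to be the main obstacle; it is precisely this coarseness of the lattice (paralleling the ``two orbits of vertices'' behaviour of $[G(3,1,n)]_2$ mentioned in the remark above) that is responsible for the failure of Steinberg's theorem.
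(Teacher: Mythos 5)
Your proposal is correct and follows essentially the same route as the paper: exhibit the witness elements of Table~\ref{counterexamples}, extend by the identity for larger $n$, and verify by arithmetic in $\ZZ[\xi]$ (respectively $\ZZ[2\xi]$) that the fixed point avoids every reflecting hyperplane --- the paper merely packages the $[G(r,r,3)]_1$ check as the statement that the three coordinates of $u$ lie in distinct orbits of $G(r,1,1)\ltimes\ZZ[\xi]$ acting on $\CC$, which is equivalent to your evaluation of the forms $x_j-\zeta x_k$, and your computation that the relevant module for $[G(6,3,2)]_2$ is $\ZZ[2\xi]=\ZZ+2\ZZ\omega$ (containing neither $\xi$ nor $-\omega$) is exactly the ``direct check'' the paper leaves to the reader. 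One small slip: for the mixed forms in $[G(4,4,3)]_1$ and $[G(6,6,3)]_1$ (one coordinate of type $1/(1-\xi)$, one of type $-1/2$) the values are not of your claimed shape $c/(1-\xi)^a$ or $c/2$ with $c$ a unit or $2$ --- e.g.\ $(x_1-x_2)(u)=(2+i)/2$ in $[G(4,4,3)]_1$ --- but they still visibly lie outside $\ZZ[\xi]$, so the conclusion is unaffected.
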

\begin{proof}
Table~\ref{counterexamples} records elements in each group $W$ which fix a point 
in $V$ not on
a reflecting hyperplane
for $W$.
For $W=[G(3,1,2)]_2$ or $[G(6,3,2)]_2$, one may
check directly that
the fixed point space $V^g$ of the element
$g$ given in the table does not lie on an reflecting hyperplanes for $W$.
For $W=[G(r,r,3)]_1$,
one may check that
$V^g=\{u\}$ 
for a vector $u$ in $V=\CC^3$
with 
$x_1(u)$, $x_2(u),$
and $x_3(u)$ lying
in different orbits
under the action of
$G(r,1,1)\ltimes \ZZ[\xi]$.
But each 
reflecting hyperplane for $[G(r,r,3)]_1$ is the zero set of a polynomial $x_j=\xi^m x_k+\beta$ for $m\in\BBZ$ and $\beta\in\BBZ[\xi]$,
and thus any point
on a reflecting hyperplane has two
coordinates
lying in the same
orbit of
$G(r,1,1)\ltimes \ZZ[\xi]$
acting on $\CC$.
Thus the claim follows for $n=3$,
and for larger $n$ by extending
the given $g$ by an identity transformation to act on $\CC^n$.
\end{proof}

The results in this section and last together
give our main result, restated from the introduction:

\begin{theorem}
Suppose $W=G(r,p,n)\ltimes\Lambda$
is a genuine 
crystallographic complex reflection group
acting on\/ $\mathbb{C}^n$ for
some lattice $\Lambda$
with $r,p,n\geq 1$.
Then $W$ has the Steinberg property
if and only if\/ 
$W$ is 
$[G(r,1,1)]_1$,
  $[G(3,1,n)]_{1}$, $[G(4,1,n)]_{1,2}$, $[G(6,1,n)]_1$,
  $[G(4,2,n)]_{1,2}$, $[G(4,2,2)]_3$,
  $[G(6,2,n)]_1$, $[G(6,2,2)]_2$, or $[G(6,3,n)]_{1}$.
 \end{theorem}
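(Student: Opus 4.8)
The plan is to assemble the main theorem from the pieces already established in Sections~\ref{groupspassing} and~\ref{groupsfailing}, checking that the list of groups named is exactly the complement (within the genuine $4$-parameter family) of the groups shown to fail. First I would invoke the classification: by Popov~\cite{Popov} and Table~\ref{StateTable}, every genuine crystallographic group $W=G(r,p,n)\ltimes\Lambda$ in the infinite family is one of the groups listed in that table, so it suffices to decide the Steinberg property for each row. The rank-$1$ case $[G(r,1,1)]_1$ is handled in Subsection~\ref{rank-one} (it always holds), so I may assume $n\geq 2$ and $r>2$ (non-genuine $r=2$ groups and the dihedral $[G(r,r,2)]$ are deferred to Section~\ref{Coxetersection}).

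For the positive direction, I would collect the propositions that already establish the Steinberg property: Proposition~\ref{[G(r,1,n)]_1} covers $[G(r,1,n)]_1$ for $r\geq 3$ and $[G(4,1,n)]_2$; the subsequent proposition covers $[G(4,2,n)]_{1,2}$, $[G(4,2,2)]_3$, $[G(6,2,n)]_1$, $[G(6,3,n)]_1$ (with the single exception $[G(6,3,2)]_2$), and Proposition~\ref{[G(6,2,2)]_2} handles $[G(6,2,2)]_2$. Together these account for every $W$ named in the statement. For the negative direction, Proposition~\ref{thosewithout} shows $[G(r,r,n)]_1$ for $r\geq 3$, $n\geq 3$, together with $[G(3,1,n)]_2$ for $n\geq 2$ and $[G(6,3,2)]_2$, all fail the Steinberg property. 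Reading down Table~\ref{StateTable} and removing the rows just listed as failing leaves precisely the rows in the ``\cmark'' column, which are exactly the groups enumerated in the theorem; this confirms the ``if and only if''.

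The one point requiring care—and the main (mild) obstacle—is the bookkeeping at small parameters: several low-rank coincidences ($G(3,3,2)\cong W(A_2)$, $G(4,4,2)\cong G(2,1,2)$, $G(6,6,2)$ dihedral, $G(2,2,2)$ reducible) make certain entries of Table~\ref{StateTable} either non-genuine or not irreducible, so one must verify that excluding these does not remove any genuine group from the list, and that the stated ranges ($n\geq 2$ or $n\geq 3$, $r\geq 3$) in the cited propositions exactly cover the genuine rows of the table. I would also note that $G(r,p,n)$ being non-Coxeter is automatic once $r>2$ and we avoid $[G(r,r,2)]$, so the hypothesis ``genuine'' in the theorem statement is exactly the hypothesis under which Table~\ref{StateTable} was compiled. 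With these checks in place, the theorem follows immediately by citing the enumerated results row by row.
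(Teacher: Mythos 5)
Your proposal is correct and follows essentially the same route as the paper, which likewise obtains this theorem by assembling Proposition~\ref{[G(r,1,n)]_1}, Proposition~\ref{[G(6,2,2)]_2}, the proposition covering $G(4,2,n)$, $G(6,2,n)$, $G(6,3,n)$, and Proposition~\ref{thosewithout}, and checking against Table~\ref{StateTable}. Your extra care about the low-rank coincidences and parameter ranges is consistent with (and slightly more explicit than) what the paper records.
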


\section{Complexification of
finite Coxeter groups}
\label{Coxetersection}
Now for the nongenuine case.
We consider
crystallographic
reflection groups
$G(r,p,n)\ltimes \Lambda$
acting on $V=\CC^n$
where $G(r,p,n)$
 is the complexification of a finite Coxeter group
 acting on $\RR^n$.
We exclude complexifications of affine Weyl groups, as they are not crystallographic,
although the groups $G(r,p,n)$
that give rise to nongenuine crystallographic
groups
are all Weyl groups.
The explicit lattices
 $\Lambda$ appear
 in Table~\ref{CoxeterTable}
 with,
again, $\xi$ a primitive $r$-th root-of-unity in $\CC$.
We include groups whose
linear part is the 
Weyl group $W(A_{n-1})$,
the irreducible reflection representation of $G(1,1,n)\cong \mathfrak{S}_n$.
We also include groups with linear part $G(r,r,2)$
for $r>2$ since
$G(r,r,2)$ is the complexification of the dihedral groups $D_{2r}$
of order $2r$
after change-of-basis.
We use Popov's
notation with
each $\alpha$
a complex parameter in the {\em modular strip}
$$
\Omega=
\{z \in \CC:
-(1/2)\leq \Re(z) < 1/2, 
1\leq |z| \text{ for }
\Re(z) \leq 0,
1< |z| \text{ for }
\Re(z) > 0\}.
$$

Note that up to equivalence, there are only 3 parameters 
of crystallographic groups with linear part the Weyl group $W(B_2)=G(2,1,2)$
and Popov choose to label
these $[G(2,1,2)]^\alpha_k$
for $k=1,2,3$ and $\alpha$ in $\Omega$.
As this notation conflicts with his notation for higher dimensional groups, we use indices $k=1,2,4$ instead so that the lattices for $[G(2,1,2)]^\alpha_k$ and
$[G(2,1,n)]^\alpha_k$ for $n>2$
are always analogous.
Note that the group 
$[G(2,1,2)]^\alpha_3$
is equivalent to
$[G(2,1,2)]^\alpha_4$
and 
$[G(2,1,2)]^\alpha_5$
is equivalent
to $G[(2,1,2)]^\alpha_1$,
thus we restrict
to
$n\geq 3$
for $[G(2,1,n)]^\alpha_5$ 
in Table~\ref{CoxeterTable}.

\newcommand{\strutF}{\rule[-1.5ex]{0ex}{4ex}}
\begin{small}
\begin{table}[h!tbp] 
\caption{ Crystallographic
    reflection groups
    $W=\text{Coxeter Group}\ltimes\Lambda$}
\label{CoxeterTable}
  \centering
\begin{tabular}{p{.15\textwidth}p{.1\textwidth}p{.44\textwidth}p{.15\textwidth}
}
  \toprule
{\bf Group $W$} & {\bf dim} &
 {\bf $G(r,p,n)$-invariant lattice $\Lambda$}
& {\bf Steinberg's thm}  
\\
\midrule 
\strutF
$[W(A_{n-1})]^{\alpha}_1$
& $n-1\geq 2$
& $\Sigma_{j=2}^{n}(\ZZ + \ZZ\alpha)(e_{j-1}-e_j)$
& \cmark 
\\ 
\midrule
\strutF
$[G(2,1,1)]^\alpha_1$
& $n=1$
& $(\ZZ + \ZZ\alpha)e_1$
& \cmark 
\\\midrule
\strutF
$[G(2,1,n)]^\alpha_1$
& $n\geq 2$
& $(\ZZ + \ZZ\alpha)e_1+\Sigma_{j=2}^{n}(\ZZ + \ZZ\alpha)(e_{j-1}-e_j)$
& \cmark 
\\ 
\strutF
$[G(2,1,n)]^{\alpha}_2$
& $n\geq 2$
& $(\ZZ + \ZZ\alpha)e_1+\Sigma_{j=2}^{n}
\left(\ZZ+\ZZ\frac{1+\alpha}{2}\right)(e_{j-1}-e_j)$
& \xmark 
\\ 
\strutF
$[G(2,1,n)]^{\alpha}_3$
& $n\geq 2$
& $(\ZZ + \ZZ\alpha)e_1+\Sigma_{j=2}^{n}\, \left(\frac{1}{2}\ZZ+\ZZ\alpha\right)(e_{j-1}-e_j)$
& \xmark 
\\
\strutF
$[G(2,1,n)]^{\alpha}_4$
& $n\geq 2$
& $(\ZZ + \ZZ\alpha)e_1+\Sigma_{j=2}^{n}\left(\ZZ+\ZZ\frac{\alpha}{2}\right)\ZZ(e_{j-1}-e_j)$
& \xmark 
\\ 
\strutF
$[G(2,1,n)]^{\alpha}_5$
& $n\geq 3$
& $(\ZZ + \ZZ\alpha)e_1+\Sigma_{j=2}^{n}
(1/2)(\ZZ + \ZZ\alpha)(e_{j-1}-e_j)$
& \xmark 
\\ \midrule
\strutF
$[G(2,2,3)]^{\alpha}_1$
& $n=3$
& $(\ZZ + \ZZ\alpha)(-e_1-e_2)+\Sigma_{j=2}^{3}(\ZZ + \ZZ\alpha)(e_{j-1}-e_j)$
& \cmark 
\\ 
\strutF
$[G(2,2,n)]^{\alpha}_1$
& $n\geq 4$
& $(\ZZ + \ZZ\alpha)(-e_1-e_2)+\Sigma_{j=2}^{n}(\ZZ + \ZZ\alpha)(e_{j-1}-e_j)$
& \xmark 
\\ \midrule
\strutF
$[G(6,6,2)]^{\alpha}_1$
& $n= 2$
& $(\ZZ + \ZZ\alpha)(\xi e_1-e_2)+(\ZZ + \ZZ\alpha)(1-\xi^2)(e_1-e_2)$
& \xmark 
\\ 
\strutF
$[G(6,6,2)]^{\alpha}_2$
& $n= 2$
& $(\ZZ + \ZZ\alpha)(\xi e_1-e_2)+\left(\ZZ+\ZZ\frac{\alpha}{3}\right)(1-\xi^2)(e_1-e_2)$
& \xmark 
\\ 
\strutB
$[G(6,6,2)]^{\alpha}_3$
& $n= 2$
&$(\ZZ + \ZZ\alpha)(\xi e_1-e_2)+\left(\ZZ+\ZZ\frac{1+\alpha}{3}\right)
(1-\xi^2)(e_1-e_2)$
& \xmark 
\\ 
\strutF
$[G(6,6,2)]^{\alpha}_4$
& $n= 2$
&  $(\ZZ + \ZZ\alpha)(\xi e_1-e_2)+\left(\ZZ+\ZZ\frac{2+\alpha}{3}\right)(1-\xi^2)(e_1-e_2)$
& \xmark 
\\ \bottomrule
\end{tabular}
\end{table} 
\end{small}

\begin{theorem}
\label{nongenuine}
  Let $W=G(r,p,n)\ltimes \Lambda$ be a crystallographic reflection group whose
  linear part $\Lin(W)$ 
  is the complexification of
  a finite Coxeter group.
    Then $W$ has the 
  Steinberg property if and only if\/
  $W$ is $[W(A_{n-1})]^{\alpha}_1$, $[G(2,1,n)]^{\alpha}_1$, or $[G(2,2,3)]_1$.
   \end{theorem}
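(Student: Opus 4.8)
The plan is to prove Theorem~\ref{nongenuine} by combining the positive results already available with a sequence of explicit counterexamples, exactly paralleling the strategy used in Sections~\ref{groupspassing} and~\ref{groupsfailing} for the genuine case. First I would dispose of the three groups that \emph{do} satisfy the Steinberg property. For $[W(A_{n-1})]^{\alpha}_1$ this is precisely Proposition~\ref{symmetric}, so nothing new is needed. For $[G(2,1,n)]^{\alpha}_1$ and $[G(2,2,3)]^{\alpha}_1$, I would fix a non-reflection $g \neq 1$ in $W$ with $V^g \neq \emptyset$; by Lemma~\ref{geometrylemma}(3), $\Lin(g)$ is a non-reflection in $W(B_n)$ or $W(D_3)$. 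Writing $\Lin(g)$ up to $\mathfrak{S}_n$-conjugacy as a product of signed cycles, I would split into the case where $\Lin(g)$ fixes no coordinate direction's sign essentially and has a genuine cycle (apply Lemma~\ref{cyclecase}, which requires checking $\beta(e_{m+1}-e_m)\in\Lambda$ for the lattices in Table~\ref{CoxeterTable}), the case where some diagonal entry of a power of $g$ equals $-1$ (apply Lemma~\ref{TechnicalLemma}; note $r=2$, $p=1$ or $p=2$, so the relevant quotients $x_j(\Tran(g))/(1-\lambda_j) = x_j(\Tran(g))/2$ and one verifies these land in the appropriate sublattice and orbit), and the diagonal-power reduction for signed cycles of even length whose sign product is $-1$. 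The lattice for $[G(2,1,n)]^{\alpha}_1$ is $(\ZZ+\ZZ\alpha)e_1 \oplus \Sigma (\ZZ+\ZZ\alpha)(e_{j-1}-e_j)$, which is $G(2,1,n)$-invariant and ``unstretched,'' so these lemmas apply cleanly; the $D_n$-type lattice in $[G(2,2,3)]^{\alpha}_1$ needs the same check plus the observation that $[G(2,2,3)]^{\alpha}_1 \subset [G(2,1,3)]^{\alpha}_1$ with matching reflecting hyperplanes (as in the proof for $[G(4,2,n)]$), letting it inherit the property.

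Next I would handle the negative direction by exhibiting, for each remaining group in Table~\ref{CoxeterTable}, an element $g$ with $V^g$ off every reflecting hyperplane. The cases $[G(2,1,n)]^{\alpha}_k$ for $k=2,3,4,5$, $[G(2,2,n)]^{\alpha}_1$ for $n\geq 4$, and $[G(6,6,2)]^{\alpha}_k$ for $k=1,2,3,4$ all involve a lattice in which the ``difference direction'' sublattice has been scaled (by $1/2$ or $1/3$) relative to the $G(r,1,n)$-invariant one, creating extra orbits under the relevant $1$-dimensional reflection group $G(r,1,1)\ltimes\Lambda'$ on $\CC$. For each I would take $g$ with $\Lin(g) = \diag{-1,\dots,-1}$ (or a small signed cycle in the $D_n$ and dihedral cases) and translational part chosen so that the coordinates $x_1(u),\dots$ of the unique fixed point land in pairwise distinct orbits of $G(r,1,1)\ltimes\Lambda'$ acting on $\CC$ --- mimicking the computation in Proposition~\ref{thosewithout}. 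The key structural fact is the same one used there: every reflecting hyperplane of such a $W$ is a zero set $x_j = \xi^m x_k + \beta$ (or $x_j = \beta$) with $\beta$ in the relevant ring of integers, so a point lying on a reflecting hyperplane necessarily has two coordinates in the same orbit; producing $u$ with all coordinates in distinct orbits then certifies $V^g$ avoids all reflecting hyperplanes. For $[G(2,2,n)]^\alpha_1$ with $n\geq 4$ one adapts the $[G(r,r,n)]_1$ counterexample of Proposition~\ref{thosewithout} (this is the $r=2$ analogue, which was excluded there only because $W(D_n)$ is a Coxeter group); for the dihedral groups $[G(6,6,2)]^\alpha_k$ one works in $\CC^2$ directly.

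I would organize these explicit $g$'s into a table analogous to Table~\ref{counterexamples}, then the proof reduces to a uniform lemma: if $u\in\CC^n$ has the property that $x_1(u),\dots,x_n(u)$ lie in pairwise distinct orbits under $G(r,1,1)\ltimes\Lambda'$ on $\CC$, and $V^g=\{u\}$, and every reflecting hyperplane of $W$ has the coordinate form above, then $u$ lies on no reflecting hyperplane. Combined with a direct check that the tabulated $\Lin(g)$ does fix the claimed $u$ and that $g\in W$ (i.e.\ $\Tran(g)\in\Lambda$), this finishes the negative direction, and larger $n$ follows by extending $g$ by an identity block as in Proposition~\ref{thosewithout}.

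The main obstacle, as in the genuine case, is the orbit combinatorics on $\CC$: one must correctly identify, for each scaled lattice $\Lambda'$, the orbits of $G(2,1,1)=\{\pm 1\}$ or $G(6,6,2)$-related cyclic groups acting by multiplication together with $\Lambda'$ acting by translation, and verify both that the positive-direction lattices ($[G(2,1,n)]^\alpha_1$, $[G(2,2,3)]^\alpha_1$) are ``unscaled'' enough for Lemma~\ref{TechnicalLemma}(3) to close every case, and that each negative-direction lattice genuinely has $\geq 2$ orbits allowing a fixed point with all coordinates in distinct orbits. The parameter $\alpha\in\Omega$ must be carried through uniformly; fortunately the orbit count (e.g.\ index $2$ for the $1/2$-scalings, index $3$ for the $1/3$-scalings) is independent of $\alpha$, so a single choice of $g$ works for the whole modular strip. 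A secondary subtlety is the dihedral groups $[G(6,6,2)]^\alpha_k$: here $\Lin(W)$ in the given coordinates is $G(6,6,2)$ rather than a standard $W(I_6)$, so one must be careful to use the reflecting hyperplanes $x_1 - \xi^j x_2 = (\text{lattice element})$ in their actual $G(6,6,2)$-form rather than a dihedral normal form when verifying $V^g$ avoids them.
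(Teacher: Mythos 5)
Your overall architecture (positive cases via Proposition~\ref{symmetric} and the $G(r,1,n)$-style cycle/diagonal analysis; negative cases via a table of explicit elements whose fixed points have coordinates in distinct orbits of a rank-one group acting on $\CC$) is essentially the paper's proof. However, there is one genuine gap: your treatment of $[G(2,2,3)]^{\alpha}_1$. You propose to let it \emph{inherit} the Steinberg property from $[G(2,1,3)]^{\alpha}_1$ on the grounds that the two groups have ``matching reflecting hyperplanes,'' by analogy with the $[G(4,2,n)]\subset[G(4,1,n)]$ argument. That analogy fails here. The inheritance trick for $G(4,2,n)$ works because $G(4,2,n)$ still contains the diagonal reflections with a single entry $-1=i^2$, so its arrangement includes the coordinate hyperplanes and in fact coincides with that of $G(4,1,n)$. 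By contrast, $G(2,2,n)$ contains \emph{no} diagonal reflections (a single $-1$ on the diagonal has determinant $-1\neq 1$), so its reflecting hyperplanes are only those of the form $\ker(x_j\pm x_\ell-\beta)$, a strictly smaller arrangement than that of $[G(2,1,3)]^\alpha_1$, which also has the hyperplanes $\ker(x_j-\beta)$. Containment in $[G(2,1,3)]^{\alpha}_1$ therefore only tells you that a nonregular point lies on some $B_3$-type hyperplane, possibly one of the form $x_j=\beta$ that is \emph{not} a reflecting hyperplane of $[G(2,2,3)]^{\alpha}_1$; it does not prove the Steinberg property for the $D_3$ group.

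The paper closes this case by direct computation: after reducing (via Lemma~\ref{cyclecase} for the $3$-cycle case) to $\Lin(g)=\diag{-1,-1,1}$, it writes $\Tran(g)=\beta(-e_1-e_2)+\gamma(e_1-e_2)+\delta(e_2-e_3)$, uses Lemma~\ref{geometrylemma}(1) to force $\delta=0$ (finite order), and then shows the fixed point satisfies $x_1(u)+x_2(u)=-\beta$, i.e.\ lies on the genuinely $D$-type hyperplane $\ker(x_1+x_2+\beta)$, which is fixed by a reflection in $W$ with translational part $(\beta,\beta,0)\in\Lambda$. You would need this (or an equivalent application of Lemma~\ref{TechnicalLemma}(3), checking that the two relevant quotients $(\gamma-\beta)/2$ and $(-\gamma-\beta)/2$ differ by $-\beta\in\ZZ+\ZZ\alpha$ and hence lie in one orbit) in place of the inheritance claim. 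A smaller point on the negative direction: your ``pairwise distinct orbits'' criterion only rules out hyperplanes of the form $x_j=\xi^m x_\ell+\beta$; for the groups $[G(2,1,n)]^\alpha_k$ the counterexample point must additionally avoid the coordinate hyperplanes $x_j=\beta$ with $\beta\in(1/2)(\ZZ+\ZZ\alpha)$, which requires a separate (easy) check on each tabulated element.
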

\begin{proof}
We assume $n>1$,
as the claim holds for
$n=1$ (see Subsection~\ref{rank-one}).
The groups
$[W(A_{n-1})]_1^\alpha$
have the Steinberg property by Proposition~\ref{symmetric}.
If $W=[G(2,1,n)]_1^\alpha$,
arguments as in the proof of Proposition~\ref{[G(r,1,n)]_1} show that $W$ has the Steinberg property.

Now suppose $W=[G(2,2,3)]^{\alpha}_1$
and consider
a nonidentity element
$g\in W$ fixing a point in $V$. We assume $g$ itself is not a reflection
and thus $\Lin(g)$
is also not a reflection
by Lemma~\ref{geometrylemma}(3).  Then some power
of $\Lin(g)$ must be conjugate 
by an element of
$G(2,2,3)$ to $\diag{-1,-1,1}$
or to a
$3$-cycle in
$\mathfrak{S}_3$.
If conjugate to a $3$-cycle, 
we appeal to 
Lemma~\ref{cyclecase}.
Thus we assume $\Lin(g)=\diag{-1,-1,1}$. Notice
$$
\Tran(g)= \beta\begin{pmatrix}-1\\-1\\0\end{pmatrix}+\gamma\begin{pmatrix}1\\-1\\0\end{pmatrix}+\delta\begin{pmatrix}0\\1\\-1\end{pmatrix}
\quad\text{ for some }\ \  \beta,\gamma,\delta\in(\ZZ + \ZZ\alpha).
$$  
By Lemma~\ref{geometrylemma}(1), $g$ has finite order, so $\delta=0$. If $g(u)=u$, then $x_1(u)=\frac{\gamma-\beta}{2}$ and $x_2(u)=\frac{-\gamma-\beta}{2}$ and $u$ lies on the reflecting hyperplane $\ker(x_1+x_2+\beta)$ of $W$. (This hyperplane is fixed by the reflection in $W$ with linear part sending $e_1$ to $-e_2$, $e_2$ to $-e_1$,
and $e_3$ to $e_3$
and translational part $(\beta,\beta,0)\in\Lambda$.)

The remaining nongenuine crystallographic complex reflection groups do not have the Steinberg property.
For the groups $W=[G(2,1,n)]^\alpha_k$
with $k\neq 1$ and
$\alpha \in \Omega$, observe that each reflecting hyperplane
has the form
$H=\ker(x_j-\beta)$ for some $\beta\in (1/2)(\ZZ + \ZZ\alpha)$ or the form $H=\ker(x_j\pm x_{\ell}-\beta)$ for some $\beta$ in $x_1(\Lambda\cap \CC (e_1-e_2))$. The reflecting
hyperplanes for $[G(2,2,n)]^{\alpha}_1$ all have the form $H=\ker(x_j\pm   x_{\ell}-\beta)$ for some $\beta\in (\ZZ + \ZZ\alpha)$. It is straightforward to check that the group elements $g$ in Table~\ref{counterexamplesCoxeter}
each fix a point that is not on one of these hyperplanes, after extending $g$ by the identity to define a transformation on $\CC^n$. For the groups $W=[G(6,6,2)]_k^\alpha$, 
one can similarly
check directly that the fixed point set of the element $g$ in $W$ in Table~\ref{counterexamplesCoxeter} does not lie on any reflecting hyperplane.
\end{proof}

\newcommand{\Drule}{\rule[-2ex]{0ex}{4.5ex}}
\newcommand{\strutD}{\rule[0ex]{0ex}{0ex}}
\newcommand{\strutE}{\rule[0ex]{0ex}{0ex}}

\begin{table}[h!tpb]
\caption{Crystallographic  groups failing
Steinberg's theorem}
\label{counterexamplesCoxeter}
\centering
\begin{tabular}{p{.2\textwidth} p{.35\textwidth} p{.17\textwidth}}
\toprule
\text{\bf Group} $W$ & 
\multicolumn{2}{l}{
$g \in W$ 
\text{\bf with} $V^g\not\subset$
\text{\bf union of reflecting hyperplanes}
\rule[-1ex]{0ex}{3ex}
}
\\ 
\midrule
$[G(2,1,2)]^{\alpha}_2$ &
$g(v)=\begin{smallpmatrix} -1& & \\ &-1& \strutD  \end{smallpmatrix}v
+\begin{smallpmatrix} \  \
(3+\alpha)/ 2\\-(1+\alpha)/ 2
\strutD 
\end{smallpmatrix}$
& \text{for} \ \ $v\in \CC^2$
\Drule
\\ 
$[G(2,1,3)]^{\alpha}_3$ & $g(v)=\begin{smallpmatrix} -1& & \\ &-1& \strutD \end{smallpmatrix}v 
+\begin{smallpmatrix} \, (2\alpha+1)/ 2
\\
-1/ 2\strutD
 \end{smallpmatrix}$
& \text{for} \ \ $v\in \CC^3$
\Drule
\\
$[G(2,1,2)]^{\alpha}_4 $  & 
$g(v)=\begin{smallpmatrix} -1& & \\ &-1& \strutD \end{smallpmatrix}v 
+\begin{smallpmatrix} \, (2+\alpha)/ 2
\\-\alpha/ 2
\strutD
\end{smallpmatrix}$
& \text{for} \ \ $v\in \CC^2$
\Drule
\\ 
$[G(2,1,3)]^{\alpha}_5$ & 
$g(v)=\begin{smallpmatrix} -1& & \\ &-1& \strutD \\&& -1\end{smallpmatrix}v 
+\begin{smallpmatrix} \, (3+\alpha)/ 2\\-\alpha/ 2\strutD \\ \ \, -1/ 2\end{smallpmatrix}$
& \text{for} \ \ $v\in \CC^3$
\Drule
\rule[-3ex]{0ex}{1ex}
\\ \hline
\rule[-2.5ex]{0ex}{6.5ex}
$[G(2,2,4)]^{\alpha}_1$ & 
$g(v)= \begin{smallpmatrix} -1&&& \\ &-1&&
\\
&&-1&
\\
&&&-1\end{smallpmatrix}v
+\begin{smallpmatrix} 1\\1+\alpha
\\
-\alpha
\\
 0
 \end{smallpmatrix}$
& \text{for} \ \ $v\in \CC^4$
\Drule
\\ 
\midrule
$[G(6,6,2)]^{\alpha}_1$ & 
$g(v)= \begin{smallpmatrix} 
-1& 
\rule[-.5ex]{0ex}{.5ex}\\ 
 &-1 \end{smallpmatrix}v
+\begin{smallpmatrix}  
\ \, \xi+\alpha(1+\xi)
\rule[-.5ex]{0ex}{.5ex}\\  
-1-\alpha(1+\xi)  \end{smallpmatrix}$
& \text{for} \ \ $v\in \CC^2$
\Drule
\\ 
$[G(6,6,2)]^{\alpha}_2$ & 
$g(v)= \begin{smallpmatrix} 
-1& 
\strutE \\ 
 &-1 \end{smallpmatrix}v
+\begin{smallpmatrix}  
\ \, \xi+\alpha(1+\xi)/ 3
\strutE\\  
-1-\alpha(1+\xi)/ 3  \end{smallpmatrix}$
& \text{for} \ \ $v\in \CC^2$
\Drule
\\ 
$[G(6,6,2)]^{\alpha}_3$ & 
$g(v)= \begin{smallpmatrix} 
-1& 
\strutE \\ 
 &-1 \end{smallpmatrix}v
+\begin{smallpmatrix}  
\ \, \xi+(1+\alpha)(1+\xi)/ 3
\strutE\\  
-1-(1+\alpha)(1+\xi)/ 3  \end{smallpmatrix}$
& \text{for} \ \ $v\in \CC^2$
\Drule
\\ 
$[G(6,6,2)]^{\alpha}_4$ & 
$g(v)= \begin{smallpmatrix} 
-1& 
\strutE \\ 
 &-1 \end{smallpmatrix}v
+\begin{smallpmatrix}  
\ \, \xi+(2+\alpha)(1+\xi)/ 3
\strutE\\  
-1-(2+\alpha)(1+\xi)/ 3  \end{smallpmatrix}$
& \text{for} \ \ $v\in \CC^2$
\Drule
\\ 
\bottomrule
\end{tabular}
\end{table}

\FloatBarrier


\section*{Acknowledgments}
The second author thanks Cathy Kriloff for lively and helpful discussions.

\bibliographystyle{abbrv}



\end{document}